\documentclass[conference]{IEEEtran}

\usepackage{cite}
\usepackage{amsmath,amssymb,amsfonts}
\usepackage{algorithmic}
\usepackage{graphicx}
\usepackage{textcomp}
\usepackage{xcolor}
\usepackage[affil-it]{authblk}
\def\BibTeX{{\rm B\kern-.05em{\sc i\kern-.025em b}\kern-.08em
    T\kern-.1667em\lower.7ex\hbox{E}\kern-.125emX}}

\newcommand{\cA}{{\cal A}}

\newcommand{\cC}{{\cal C}}

\newcommand{\cM}{{\cal M}}

\newcommand{\bldc}{{\mathbf{c}}}
\newcommand{\bldx}{{\mathbf{x}}}
\newcommand{\bldy}{{\mathbf{y}}}

\newcommand{\Type}[1]{{Type~${#1}$}}
\newcommand{\TA}{{\texttt{\large A}}}
\newcommand{\TB}{{\texttt{\large B}}}
\newcommand{\TC}{{\texttt{\large C}}}
\newcommand{\Tone}{{\mathrm{I}}}
\newcommand{\Ttwo}{{\mathrm{II}}}

\newcommand{\mmod}{{\mbox{mod}}}

\newcommand{\field}[1]{\mathbb{#1}}

\newcommand{\F}{\field{F}}

\newtheorem{theorem}{Theorem}
\newtheorem{lemma}{Lemma}
\newtheorem{corollary}{Corollary}
\newtheorem{proposition}{Proposition}

\begin{document}

\title{\title{Closed Expressions for the Weight Distributions\\
of Codes Associated with Perfect Codes}
}

\author{Tuvi Etzion\IEEEauthorrefmark{1},
        Denis Krotov\IEEEauthorrefmark{2}\thanks{D.S. Krotov is supported within the framework of the state contract of the Sobolev Institute of
Mathematics (FWNF-2026-0011).},
        Minjia Shi\IEEEauthorrefmark{3},
        and Wenhao Song\IEEEauthorrefmark{3}}

\affil{
\IEEEauthorrefmark{1}\small{Department of Computer Science, Technion --- Israel Institute of Technology, Haifa, 3200003 Israel}\\
\IEEEauthorrefmark{2}\small{Sobolev Institute of Mathematics, Novosibirsk 630090, Russia}\\
\IEEEauthorrefmark{3}\small{School of Mathematical Sciences, Anhui University, Hefei 230601, China}\\
}

\maketitle

\begin{abstract}
Perfect codes are arguably the most fascinating structures in combinatorial coding theory, and their
classification and weight distribution are of considerable interest.
This classification also involves the analysis of some related structures.
This paper considers five closely related structures, but all of them have never been tied together before.
These structures are 1-perfect codes, extended 1-perfect codes, nearly perfect 1-covering codes, extended nearly perfect 1-covering codes,
and one family of completely regular codes (to be called diamond codes).
The current work concentrates on the weight distributions of these five families of codes.
In the past, some of these weight distributions were not computed, some required heavy tools, and for some
only the weight enumerator was presented. We provide complete weight distributions for all five families using some methods
that do not require any heavy tools.
\end{abstract}

\begin{IEEEkeywords}
Completely regular codes, covering codes, nearly perfect codes, perfect codes, weight distribution
\end{IEEEkeywords}

\section{Introduction}
\label{sec:Intro}

Perfect codes have always been fascinating structures, and there is extensive literature on them~\cite{Etz22}
for various schemes, metrics, and distance measures. However, the first and arguably the most interesting
scheme to consider is the Hamming scheme.

In this work only the binary Hamming scheme is considered, i.e., all codes are over the binary field $\F_2$
and an $(n,M)$ code~$\cC$ (of length $n$ and size $M$) is a subset of  $\F_2^n$.
The \emph{(Hamming) distance} between two words $\bldx,\bldy \in \F_2^n$  denoted by $d(\bldx,\bldy)$
is the number of coordinates with different values between $\bldx=(x_1,x_2,\ldots,x_n)$ and $\bldy=(y_1,y_2,\ldots,y_n)$.
The \emph{weight} of a word $\bldx$ is the number of nonzero entries in~$\bldx$.
The \emph{minimum distance} of a code $\cC$ is the smallest distance between any two distinct codewords in $\cC$.
The distance of a word $\bldx \in \F_2^n$ from $\cC$ is defined by
\vspace{-0.05cm}
$$
d(\bldx,\cC) = \text{min}_{\bldc \in \cC} d(\bldx,\bldc) .
$$
A related parameter, $R$, the \emph{covering radius} of $\cC$ is defined by
\vspace{-0.05cm}
$$
R=\text{max}_{\bldx \in \F_2^n} d(\bldx , \cC) ,
$$
i.e., $R$ is the distance of the most distant word from $\cC$.

A \emph{translate} of an $(n,M)$ code $\cC$ is the set
\vspace{-0.05cm}
$$
\bldx+ \cC \triangleq \{ \bldx+\bldc ~:~ \bldc \in \cC \},
$$
where $\bldx \in \F_2^n$ and the addition $\bldx+\bldc$ is done bitwise modulo~2.

For an $(n,M)$ code $\cC$ with minimum distance $2R+1$ we have the \emph{sphere-packing bound}
\vspace{-0.05cm}
$$
M \cdot  \sum_{i=0}^R \binom{n}{i} \leq 2^n ,
$$
and if $\cC$ has covering radius $R$ we have the \emph{sphere-covering bound}
\vspace{-0.05cm}
\begin{equation}
\label{eq:sp_cover}
M \cdot  \sum_{i=0}^R \binom{n}{i} \geq 2^n .
\end{equation}
A code that meets both bounds is called a \emph{perfect code}. The following natural result is a consequence
from this definition.
\begin{lemma}
\label{lem:Pradius}
If $\cC$ is a perfect code of length $n$, covering radius~$R$, and minimum distance $2R+1$, then for every word $\bldx \in \F_2^n$, there
exists exactly one codeword $\bldc \in \cC$ such that $d(\bldx,\bldc) \leq R$.
\end{lemma}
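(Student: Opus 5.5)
The plan is to prove existence and uniqueness separately. Both follow from the two defining properties: covering radius $R$ gives existence, and minimum distance $2R+1$ gives uniqueness. The equality $M \sum_{i=0}^R \binom{n}{i} = 2^n$ is not needed for the argument, though it gives an alternative counting proof, sketched at the end.

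For \emph{existence}, take any $\bldx \in \F_2^n$. Since $\cC$ has covering radius $R$, we have $d(\bldx,\cC) \le R$. As $\cC$ is finite and nonempty, the minimum in the definition of $d(\bldx,\cC)$ is attained. So some $\bldc \in \cC$ satisfies $d(\bldx,\bldc)\le R$.

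For \emph{uniqueness}, suppose $\bldc_1 \neq \bldc_2$ are codewords with $d(\bldx,\bldc_1)\le R$ and $d(\bldx,\bldc_2)\le R$. The Hamming distance satisfies the triangle inequality. This holds because every coordinate where $\bldc_1$ and $\bldc_2$ differ must be a coordinate where $\bldx$ differs from at least one of them. Hence $d(\bldc_1,\bldc_2)\le 2R$, which contradicts the minimum distance $2R+1$.

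The alternative counting proof runs as follows. The balls of radius $R$ around the codewords are pairwise disjoint, by the uniqueness argument. Each ball has exactly $\sum_{i=0}^R\binom{n}{i}$ words. Their union therefore has $M\sum_{i=0}^R\binom{n}{i}=2^n$ elements, so it is all of $\F_2^n$. There is no real obstacle in either route; the only point needing a line of justification is the triangle inequality for the Hamming metric.
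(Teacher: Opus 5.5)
Your proof is correct. Existence follows from the covering radius, and uniqueness follows from the triangle inequality together with minimum distance $2R+1$. The counting variant is also sound. The paper gives no explicit proof, calling the lemma a natural consequence of the definition of a perfect code (one meeting both the sphere-packing and sphere-covering bounds), and either of your two routes is exactly the omitted argument spelled out.
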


The only nontrivial infinite family of perfect codes are ${1\text{-perfect}}$ codes.
The length of such a code is $2^r -1$ and its number of codewords is $2^{2^r -r-1}$.
From now when a perfect code is mentioned it will be referred to such a code.

An improvement on the sphere-covering bound was obtained by van Wee~\cite{vWee88}.
A simplified version of this bound was presented by Struik~\cite{Str94,Str94a} and takes the form
\vspace{-0.05cm}
\begin{equation}
\label{eq:struik}
M \cdot \left( \sum_{i=0}^R \binom{n}{i} - \frac{\binom{n}{R}}{\left\lfloor \frac{n}{R+1} \right\rfloor} \left( \left\lceil \frac{n+1}{R+1} \right\rceil - \frac{n+1}{R+1} \right) \right) \geq 2^n .
\end{equation}
When $R+1$ divides $n+1$, this bound coincides with~(\ref{eq:sp_cover}).
A code that meets the bound~(\ref{eq:struik}) is called a \emph{nearly perfect covering code}. For even $n$ and $R=1$, this bound becomes
\vspace{-0.05cm}
\begin{equation}
\label{eq:vanWee}
M \geq \frac{2^n}{n} .
\end{equation}

Except for perfect codes and some trivial codes, $R=1$ is the only radius for which we currently know of codes that meet the bound~(\ref{eq:struik});
from~(\ref{eq:vanWee}), these codes have length $n=2^r$ and size $M=2^{2^r-r}$, for some integer $r \geq 2$.
A code with these parameters will be called a \emph{nearly perfect \textup{1}-covering code} (in short, NP1CC) and from now
throughout the paper we assume that $n=2^r$ and $M=2^{n-r}$.

It was proved in~\cite{BER25} that each codeword $\bldc$ of an NP1CC $\cC$ has a unique \emph{partner} $\bldc' \in \cC$ such that
either $d(\bldc,\bldc')=1$ or $d(\bldc,\bldc')=2$. This pair of codewords $\{ \bldc,\bldc'\}$ will be also called a \emph{pair of partners}
or a \emph{\Type{\Tone} pair} if $d(\bldc,\bldc')=1$ or a \emph{\Type{\Ttwo} pair} if $d(\bldc,\bldc')=2$.
For any pair of partners $\bldc$ and $\bldc'$ at distance two, there are exactly two words at distance one from
both $\bldc$ and $\bldc'$. These words will be called \emph{midwords}.

The NP1CCs are classified into three types:
\begin{itemize}
\item \Type{\TA} codes, in which the codewords in each pair of partners $\{ \bldc,\bldc'\}$ are \Type{\Tone} pairs.

\item \Type{\TB} codes, in which the codewords in each pair of partners $\{ \bldc,\bldc'\}$ are \Type{\Ttwo} pairs.

\item \Type{\TC} codes which are neither of \Type{\TA} nor of \Type{\TB}.
\end{itemize}

For an $(n',M')$ code $\cC$, the \emph{extended code} $\cC^*$ is $(n'+1,M')$ code obtained from $\cC$ by adding to each codeword of $\cC$
an even parity as an $(n'+1)$-st coordinate, i.e.,
\vspace{-0.05cm}
$$
\cC^* \triangleq \{ (\bldc,p(\bldc)) ~:~ \bldc \in \cC \},
$$
where $p(\bldc) = \sum_{i=1}^{n'} c_i ~ (\mmod ~ 2)$ is the (even) \emph{parity} of $\bldc$. This is an extended code of even weight.
Similarly, an extended code of odd weight is defined.
An extended NP1CC is called an ENP1CC.
The following simple result was proved in~\cite{BER25}.

\begin{lemma}
In an ENP1CC $C^*$, for each codeword $\bldc \in C^*$ there is a unique codeword $\bldc' \in \cC^*$ such that
$d(\bldc,\bldc')=2$ (and $d(\bldc,\bldc'') \geq 4$ for any other codeword $\bldc'' \in \cC^* \setminus \{ \bldc,\bldc' \}$).
\end{lemma}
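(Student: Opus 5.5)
The plan is to transfer the partner structure of the NP1CC $\cC$, which we take from~\cite{BER25}, to the extended code $\cC^*$, and then to use parity to control all other distances. Every codeword of $\cC^*$ has the form $(\bldc,p(\bldc))$ with $\bldc\in\cC$. All such words have even weight, so any two distinct codewords of $\cC^*$ are at even distance. For two codewords $\bldc_1,\bldc_2\in\cC$ we have
$$
d\bigl((\bldc_1,p(\bldc_1)),(\bldc_2,p(\bldc_2))\bigr)=d(\bldc_1,\bldc_2)+\bigl(d(\bldc_1,\bldc_2)\ \mmod\ 2\bigr),
$$
because the parity bits differ exactly when $d(\bldc_1,\bldc_2)$ is odd. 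Hence the extended distance equals $2$ precisely when $d(\bldc_1,\bldc_2)\in\{1,2\}$, and it is at least $4$ whenever $d(\bldc_1,\bldc_2)\ge 3$.

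The key steps, in order, are as follows.

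\textbf{Existence.} By the result of~\cite{BER25}, $\bldc$ has a partner $\bldc'\in\cC$ with $d(\bldc,\bldc')\in\{1,2\}$. By the displayed formula, its extension lies at distance exactly $2$ from the extension of $\bldc$.

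\textbf{Uniqueness.} The partner $\bldc'$ is the unique codeword of $\cC$ at distance $1$ or $2$ from $\bldc$. Therefore every other codeword $\bldc''\in\cC\setminus\{\bldc,\bldc'\}$ satisfies $d(\bldc,\bldc'')\ge 3$. Its extension is then at distance at least $3$ from the extension of $\bldc$, and since extended distances are even, at least $4$.

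I should check that the partner result is indeed stated as uniqueness among all codewords within distance $2$, and not merely as a pairing. If only the pairing were available, I would argue directly instead. Suppose $\bldc$ had two codewords within distance $2$. Count the words of $\F_2^n$ covered by the balls of radius $1$. Equality in~(\ref{eq:vanWee}) forces the total overlap of these balls to be exactly $M(n+1)-2^n=M$. This makes the average multiplicity of the excess exactly one per codeword. Since each codeword already has a partner contributing at least one overlap, any additional close codeword would push the covering count below $2^n$.

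The main obstacle is this second route, the counting argument. The overlap bookkeeping for distance-$2$ pairs is delicate, because two balls of radius $1$ whose centres are at distance $2$ share the two midwords, not one. I would therefore lean on the stated partner uniqueness from~\cite{BER25}; with it, the proof reduces to the parity computation above.
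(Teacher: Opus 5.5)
Your proof is correct. The paper gives no argument of its own for this lemma (it only cites \cite{BER25}), but your reduction is the natural one. The NP1CC partner property quoted just before the lemma already says that $\bldc'$ is the \emph{only} codeword at distance $1$ or $2$ from $\bldc$. Appending the parity bit turns a distance $d$ into $d+(d \bmod 2)$, so distance $2$ in $\cC^*$ corresponds exactly to distance $1$ or $2$ in $\cC$, and every other distance becomes at least $4$. The same holds for odd-parity extensions and for translates, since distances are unchanged.

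Your fallback counting route is not needed, and as sketched it would not go through. The total excess $M(n+1)-2^n=M$ alone does not exclude words covered three or more times, and such words can absorb several close pairs at little cost. You would additionally need the equality analysis of the van Wee bound, which shows that every word is covered at most twice.
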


Similar to NP1CCs also two codewords $\bldc,\bldc'$, in an $(n+1,M)$ ENP1CC, for which $d(\bldc,\bldc')=2$ will be called partners or a pair of partners.
The following result on NP1CCs was proved in~\cite{BER25}.
\begin{lemma}
\label{lem:TypeA}
An NP1CC of length $n=2^r$ is of \Type{\TA} if and only if it is a union of an even translate of an extended perfect code and an odd translate
of an extended perfect code.
\end{lemma}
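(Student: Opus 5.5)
We prove the two directions of Lemma~\ref{lem:TypeA} separately. The basic count is that an extended perfect code of length $n=2^r$ has $2^{n-r-1}=M/2$ codewords and minimum distance $4$. Any translate of it is either even or odd: all its words have the same weight parity.

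\emph{Sufficiency.} Let $\cC=\cA\cup\mathcal{B}$ with $\cA$ an even translate and $\mathcal{B}$ an odd translate of extended perfect codes. The two sets are disjoint by parity, so $|\cC|=M$. For the covering property, take $\bldx\in\F_2^n$ and delete the last coordinate. Removing a coordinate from a translate of an extended perfect code gives a translate of a $1$-perfect code of length $n-1$. By Lemma~\ref{lem:Pradius}, the punctured word is within distance $1$ of some punctured codeword of $\cA$, and also of some punctured codeword of $\mathcal{B}$. One of the two lifts must agree with $\bldx$ in the last coordinate, namely the one whose parity matches. Hence $d(\bldx,\cC)\le 1$, so $\cC$ is an NP1CC.

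To show it is of \Type{\TA}, take $\bldc\in\cA$ and flip its last coordinate to get $\bldy$. The same puncturing argument applied to $\mathcal{B}$ gives a codeword of $\mathcal{B}$ that agrees with $\bldc$ on the first $n-1$ coordinates and has odd parity, so this codeword is $\bldy$ itself. Thus $\bldc$ has a neighbour at distance $1$ in $\cC$. Since partners are unique~\cite{BER25}, every pair of partners is of \Type{\Tone}.

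\emph{Necessity.} Let $\cC$ be of \Type{\TA}. Split $\cC$ into its even-weight part $\cC_0$ and odd-weight part $\cC_1$. Each \Type{\Tone} pair consists of one even and one odd word, so $|\cC_0|=|\cC_1|=M/2$. We claim $\cC_0$ has minimum distance at least $4$. Two even words are at even distance, so suppose $d(\bldc,\bldc'')=2$ with $\bldc,\bldc''\in\cC_0$. By uniqueness of partners, $\bldc$'s partner is at distance $1$ and not $\bldc''$, so this case is not immediately excluded. We use instead the exact counting that forces equality in~(\ref{eq:vanWee}). Since $M\cdot(n+1)=2^n+M$, the balls of radius $1$ around codewords overlap in exactly $M$ incidences in excess. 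The partner pairs already account for these: each \Type{\Tone} pair $\{\bldc,\bldc'\}$ produces exactly two doubly covered words, $\bldc$ and $\bldc'$, and $M/2$ pairs give $M$ excess incidences. Hence no other overlaps occur. A distance-$2$ pair inside $\cC_0$ would create two midwords covered twice, which is an extra overlap. So $\cC_0$ is an even code of length $2^r$, size $2^{n-r-1}$, and minimum distance $4$.

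Puncturing $\cC_0$ in one coordinate gives a code of length $2^r-1$ with minimum distance $3$ and size $2^{2^r-r-1}$. This meets the sphere-packing bound, so it is $1$-perfect, and $\cC_0$ is its even extension, i.e.\ an even translate of an extended perfect code. The same argument applies to $\cC_1$, whose puncture is a $1$-perfect code and which is therefore an odd translate. The main obstacle is this overlap-counting step. One has to check carefully that the excess count $M(n+1)-2^n=M$ is exactly exhausted by the $M/2$ \Type{\Tone} pairs, with no other double coverings.
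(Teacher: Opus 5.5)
The paper gives no proof of this lemma; it is quoted from~\cite{BER25}. So there is no route to compare against, and I can only judge your argument on its own. Your necessity direction is sound, but the sufficiency direction has a false step.

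To show \Type{\TA}, you assert that the punctured $\mathcal{B}$ contains the first $n-1$ coordinates of $\bldc\in\cA$, so that $\bldy=\bldc+\mathbf{e}_n\in\mathcal{B}$. The puncturing argument only gives a word of the punctured $\mathcal{B}$ within distance $1$ of the punctured $\bldc$, and in general that word is not equal to it. Take $E$ the extended Hamming code, $\cA=E$ and $\mathcal{B}=\mathbf{e}_1+E$. This is a \Type{\TA} NP1CC, yet $\mathbf{e}_n\notin\mathcal{B}$ since $\mathbf{e}_1+\mathbf{e}_n\notin E$; the partner of $\mathbf{0}$ is $\mathbf{e}_1$.

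The covering step has a related slip. For $\bldx=\mathbf{e}_n$ in the same example, the two lifts are $\mathbf{0}\in\cA$ and $\mathbf{e}_1\in\mathcal{B}$, and neither agrees with $\bldx$ in the last coordinate. So ``one of the two lifts must agree'' is false. The conclusion $d(\bldx,\cC)\le 1$ survives only because the punctured distance to $\cA$ is $0$, so you need a case split on whether the punctured distances are $0$ or $1$.

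Both steps are fixed by a parity count that avoids puncturing entirely:
\begin{itemize}
\item $\mathcal{B}$ consists of $M/2=2^{n-1}/n$ odd words at pairwise distance at least $4$.
\item Hence its $n\cdot M/2=2^{n-1}$ neighbours are distinct and even, so every even word is at distance one from exactly one codeword of $\mathcal{B}$.
\item Symmetrically, every odd word is at distance one from exactly one codeword of $\cA$.
\end{itemize}
This gives covering radius $1$ at once. Same-parity codewords are at distance at least $4$ and opposite-parity ones are at odd distance. So every codeword has exactly one other codeword within distance $2$, and it is at distance $1$. That is \Type{\TA}, without even invoking~\cite{BER25}.

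In the necessity part, one small point is missing. A midword of a distance-$2$ pair in $\cC_0$ could a priori be a codeword. It is then covered at least three times (by itself and both ends of the pair), which still contradicts your exact excess count, so the argument stands.

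Also, in the sufficiency part you read~\cite{BER25} as saying the partner is the \emph{only} codeword at distance $1$ or $2$, which is how the paper states it. Under that reading a second codeword at distance $2$ from $\bldc$ is excluded immediately, and the counting detour in the necessity part is unnecessary.
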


There is one basic and simple construction for NP1CCs presented in the following theorem~\cite{BER25}.
\begin{theorem}
\label{thm:simpConst}
If $\cC_1$ and $\cC_2$ are $(n-1,M/2)$ perfect codes, then the code
\vspace{-0.05cm}
$$
\cC \triangleq \{ (\bldc_1,0) ~:~ \bldc_1 \in \cC_1 \} \cup \{ (\bldc_2,1) ~:~ \bldc_2 \in \cC_2 \}
$$
is an $(n,M)$ NP1CC.
\end{theorem}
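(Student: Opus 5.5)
We need to show that $\cC$ has length $n$, size $M$, and covering radius $1$; by (\ref{eq:vanWee}) this forces $\cC$ to be an NP1CC. Length and size are immediate. The perfect codes $\cC_1$ and $\cC_2$ have length $n-1=2^r-1$ and each has $2^{2^r-r-1}=M/2$ codewords. The two halves are disjoint because their last coordinates differ, so $|\cC|=M=2^{n-r}$. The bound (\ref{eq:vanWee}) reads $M\ge 2^n/n$, i.e. $M\ge 2^{n-r}$, and our code meets it with equality. So it only remains to prove that every word of $\F_2^n$ is within distance one of $\cC$.

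For the covering claim, write an arbitrary word as $(\bldx,b)$ with $\bldx\in\F_2^{n-1}$ and $b\in\{0,1\}$, and let $\cC_b$ denote $\cC_1$ if $b=0$ and $\cC_2$ if $b=1$. By Lemma~\ref{lem:Pradius}, applied to the perfect code $\cC_b$ with $R=1$, there is a codeword $\bldc\in\cC_b$ with $d(\bldx,\bldc)\le 1$. Then $(\bldc,b)\in\cC$ and $d((\bldx,b),(\bldc,b))=d(\bldx,\bldc)\le 1$. Hence the covering radius of $\cC$ is at most $1$. It is exactly $1$ and not $0$, since $M<2^n$.

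Therefore $\cC$ is a code of length $n=2^r$, covering radius $1$ and size $2^{n-r}$, meeting (\ref{eq:vanWee}) with equality, which is precisely the definition of an NP1CC. There is no real obstacle: everything follows from the per-half covering argument. The only points needing care are that we fix the last bit and cover within the matching half, and that we check the sizes agree with the definition ($n=2^r$, $M=2^{n-r}$), so that the code attains the van Wee/Struik bound.
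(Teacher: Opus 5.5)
Your proof is correct and complete. Each word $(\bldx,b)$ is covered by the perfect code attached to its last bit $b$, and $|\cC|=2\cdot 2^{n-1}/n=2^{n-r}$, so $\cC$ has covering radius $1$ and meets (\ref{eq:vanWee}) with equality, which is exactly the paper's definition of an NP1CC. The paper gives no proof of this theorem (it quotes it from~\cite{BER25}), and your direct verification is the standard argument for it.
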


A code is a \emph{zeroed code} if the all-zero word is a codeword.
There is a simple correspondence between zeroed perfect codes and zeroed extended perfect codes
(adding an even parity to a zeroed perfect code and puncturing a coordinate of a zeroed extended perfect code).
A code $\cC$ of length $m$ has a weight distribution $\cA(\cC)=(A_0,A_1,\ldots,A_m)$, where $A_i=\cA_i(\cC)$ is the number of codewords
of weight $i$ in~$\cC$.

There are tight connections between perfect codes, extended perfect codes, NP1CCs, ENP1CCs, and diamond codes.
In this paper we concentrate only on the tight connections between ENP1CCs and diamond codes. Other connections, mainly
the type of NP1CCs obtained from one ENP1CC are explored and almost completely determined in~\cite{EKSS}, where also
proofs, omitted here, will be given.

The rest of the paper is organized as follows.
The family of diamond codes, which is equivalent to the family
of ENP1CCs, is discussed in Section~\ref{sec:midwordsENP}.
In Section~\ref{sec:weight_distribute}, we present closed expressions for the weight distributions of the five families that are discussed.
Moreover, we show the uniqueness of these weight distributions in a simpler way than what was shown before for only some of these families.

\section{ENP1CCs and Diamond Codes}
\label{sec:midwordsENP}

In this section, we consider one class of another type of codes called completely regular codes.
The family of perfect codes is contained in the family of completely regular codes, i.e., completely regular codes generalize the
concept of perfect codes.
Let $s_{1,1}$, $s_{1,2}$, $s_{2,1}$, and $s_{2,2}$ -- be the following four values defined for a code (with covering radius 1 for our requirements)
of length $m$.
\begin{itemize}
\item $s_{1,1}$ is the number of codewords that are at distance one from a given codeword $\bldc$.

\item $s_{1,2}$ is the number of non-codewords at distance one from a given codeword $\bldc$.

\item $s_{2,1}$ is the number of codewords at distance one from a given non-codeword $\bldx$.

\item $s_{2,2}$ is the number of non-codeword at distance one from a given non-codeword $\bldx$.
\end{itemize}
A nonempty code with a nonempty complement is called \emph{completely regular code}
(for short, we omit ``covering radius~1" in this paper) if $s_{1,1}$, $s_{1,2}$, $s_{2,1}$, and $s_{2,2}$ do not
depend on the particular choice of the codeword $\bldc$ and the non-codeword $\bldx$. Completely regular codes with
arbitrary covering radius are not defined here, but they are extensively studied;
a survey on such codes can be found in~\cite{KrPo25}.
The matrix
\vspace{-0.05cm}
$$
S= \left(\begin{array}{cc}
s_{1,1} & s_{1,2} \\
s_{2,1} & s_{2,2}
\end{array}\right)
$$
is called the \emph{quotient matrix} of the code.

A completely regular code with quotient matrix
\vspace{-0.05cm}
$$
\Diamond =\left(\begin{array}{cc} 2 & m-2 \\ 1 & m-1 \end{array}\right)
$$
is called a \emph{diamond code} or a \emph{$\Diamond$-CR code}.

Although it was stated in the definition, for completeness, we state the following fact
that is observed from the definition of the quotient matrix.
\begin{lemma}
\label{lem:SCRcover}
A diamond code $\hat{\cC}$ is a covering code with radius~$1$.
\end{lemma}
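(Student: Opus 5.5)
The plan is to read the claim directly off the second row of the quotient matrix $\Diamond$. The covering radius of $\hat{\cC}$ is $R=\max_{\bldx} d(\bldx,\hat{\cC})$, so it suffices to show two things. First, every word of $\F_2^m$ is within distance one of a codeword. Second, some word is at distance exactly one.

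For the first part, take any $\bldx \in \F_2^m$. If $\bldx \in \hat{\cC}$, then $d(\bldx,\hat{\cC})=0$. Otherwise $\bldx$ is a non-codeword, and by the definition of a completely regular code with quotient matrix $\Diamond$ it has exactly $s_{2,1}=1$ codeword neighbor. Hence $d(\bldx,\hat{\cC})=1$, and so $R\le 1$.

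For the second part, a completely regular code has, by definition, a nonempty complement. So some non-codeword $\bldx$ exists, and it has $d(\bldx,\hat{\cC})\ge 1$. Combined with the first part, this gives $R=1$. As a consistency check, each row of $\Diamond$ sums to $m$, matching the $m$ neighbors of any word, so the entries are coherent.

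No step is a real obstacle. The only point needing care is to use both defining conditions of a completely regular code: $s_{2,1}=1$ gives the upper bound, and the nonempty complement gives the lower bound. The uniqueness of the codeword neighbor of a non-codeword, which also follows from $s_{2,1}=1$, shows in addition that the radius-one balls around codewords cover every non-codeword exactly once.
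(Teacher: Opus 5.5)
Your proof is correct and matches the paper, which simply notes that the fact can be read off the quotient matrix $\Diamond$ and the definition of a completely regular code. As you observe, $s_{2,1}=1$ puts every non-codeword at distance one from a codeword, and the nonempty complement built into the definition makes the covering radius exactly $1$.
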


Let $\cC^*$ be an ENP1CC and let $\cM(\cC^*)$ be the set of midwords of all the partners in $\cC^*$.

\begin{lemma}
\label{lem:paramSCR}
If $\hat{\cC}$ is an $(m,M')$ diamond code, then $m=n+1$ and $M'=2M$.
\end{lemma}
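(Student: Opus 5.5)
The plan is to count, in two ways, the edges of the binary $m$-cube that join a codeword of $\hat{\cC}$ to a non-codeword. This count forces $m-1$ to be a power of two, and then the size of the code follows directly.

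\emph{Step 1: the double count.} By the definition of a completely regular code with quotient matrix $\Diamond$, every codeword $\bldc\in\hat{\cC}$ has exactly $s_{1,2}=m-2$ neighbours outside $\hat{\cC}$. Every non-codeword $\bldx\in\F_2^m\setminus\hat{\cC}$ has exactly $s_{2,1}=1$ neighbour inside $\hat{\cC}$. Counting the codeword--non-codeword edges once from each side gives
$$
M'(m-2) = \left(2^m - M'\right)\cdot 1,
\qquad\text{so}\qquad
M'(m-1) = 2^m .
$$

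\emph{Step 2: $m-1$ is a power of two.} From Step~1, $m-1$ divides $2^m$, so $m-1=2^r$ for some integer $r\ge 0$. Setting $n=2^r$ gives $m=n+1$. Substituting back yields
$$
M' = \frac{2^m}{m-1} = \frac{2^{n+1}}{2^r} = 2^{n-r+1} = 2M,
$$
using $M=2^{n-r}$, the size of an NP1CC of length $n=2^r$.

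\emph{Step 3: degenerate parameters.} This is the only delicate point. The paper assumes $r\ge 2$ throughout, so I would check the small cases separately.
\begin{itemize}
\item If $r=0$, then $m=2$ and $M'=4=2^m$. The complement of $\hat{\cC}$ is then empty, which contradicts the definition of a completely regular code.
\item If $r=1$, then $m=3$ and $M'=4$. The $4$ codewords have $s_{1,1}=2$ and $s_{1,2}=1$; this is the case $n=2$.
\end{itemize}
With $r=1$ the statement reads $n=2$ and $M=2$, which is still consistent with the formulas, so the only adjustment is to note that $r\ge1$ (and $r\ge2$ under the paper's standing assumption). Beyond recording this, Step~3 requires no real work. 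The whole argument uses only the two off-diagonal entries of the quotient matrix $\Diamond$ and none of the structure of ENP1CCs.
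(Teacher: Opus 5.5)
Your proof is correct and follows the paper's argument: you double count the codeword--non-codeword adjacencies using $s_{1,2}=m-2$ and $s_{2,1}=1$, which gives $(m-1)M'=2^m$, hence $m-1=2^r$ and $M'=2^{n+1-r}=2M$. Your separate treatment of $r=0$ (excluded because the complement would be empty) and $r=1$ is a small extra that the paper avoids by simply assuming $r\ge 2$.
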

\begin{IEEEproof}
Let $\hat{\cC}$ be an $(m,M')$ diamond code. Since $M'$ is the number of codewords in $\hat{\cC}$, it follows that
the number of non-codewords in $\F_2^m$ is $2^m - M'$. By the quotient matrix, each non-codeword is at distance one
from exactly one codeword and there are $m-2$ non-codewords at distance one from a given codeword.
This implies that the number of non-codewords in $\F_2^m$ is $(m-2)M'$, i.e.,
\vspace{-0.05cm}
$$
2^m - M' = (m-2) M' ~~ \text{i.e.,} ~~ 2^m = (m-1) M' ~.
$$
Hence, both $m-1$ and $M'$ are powers of two. Therefore, the only solutions for $m$ and $M'$, for any $r \geq 2$, are
$m=2^r+1=n+1$ and $M' = 2^{2^r +1 -r} =2M$.
\end{IEEEproof}

\begin{proposition}
\label{prop:OTO_ENP_CR}
There is a one-to-one correspondence between the set of zeroed $(n+1,M)$ ENP1CCs and the set of zeroed $(n+1,2M)$ diamond codes.
\end{proposition}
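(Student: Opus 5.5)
The correspondence I would establish is $\cC^* \mapsto \hat{\cC} \triangleq \cC^* \cup \cM(\cC^*)$, where $\cC^*$ is a zeroed (hence even-weight) ENP1CC. The inverse is $\hat{\cC} \mapsto \hat{\cC}_{\rm even}$, the set of even-weight codewords of a zeroed diamond code. The plan is to check that each map lands in the right family, and then that the two maps are mutually inverse.

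\textbf{From ENP1CC to diamond code.} There are $M/2$ pairs of partners and each contributes two odd-weight midwords, so $|\hat{\cC}|=2M$.

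First I compute $s_{1,1}$.
For a codeword $\bldc\in\cC^*$, its two midwords are neighbours in $\hat{\cC}$. A midword of another pair cannot be adjacent to $\bldc$, since that would place another codeword within distance $2$ of $\bldc$, against the lemma on ENP1CCs (partners at distance $2$, all other codewords at distance $\ge 4$).
For a midword $\bldx$, its even neighbours in $\hat{\cC}$ are exactly the two partners, by the same argument. It has no odd neighbours at all.
Hence $s_{1,1}=2$ in both cases.

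Next, each non-codeword must have exactly one neighbour in $\hat{\cC}$; the other entries of $\Diamond$ then follow automatically.
\begin{itemize}
\item Odd non-codewords. Balls of radius $1$ around $\cC^*$ cover odd words $M(n+1)=2^n+M$ times. An odd word covered at least twice lies between two codewords at distance $2$, i.e.\ it is a midword, and each of the $M$ midwords is covered exactly twice. So every odd non-midword is covered exactly once.
\item Even non-codewords, upper bound. The midwords have $M(n-1)=2^n-M$ even non-codeword neighbours counted with multiplicity, which equals the number of even non-codewords. So it suffices to show that every even non-codeword $\bldx$ has \emph{at least one} midword neighbour.
\item Even non-codewords, the parity step. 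Suppose $\bldx$ has no midword neighbour. Then each of its $n+1$ odd neighbours is covered by exactly one codeword, and each codeword at distance $2$ from $\bldx$ covers exactly two of these neighbours. That forces $n+1=2^r+1$ to be even, a contradiction.
\end{itemize}

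\textbf{From diamond code to ENP1CC.} Let $\hat{\cC}$ be a zeroed diamond code of length $n+1$ (Lemma~\ref{lem:paramSCR}).

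I first show that the subgraph induced on $\hat{\cC}$, which is $2$-regular, is a disjoint union of $4$-cycles.
Take a path $\bldc - \bldw - \bldc'$ inside $\hat{\cC}$. The words $\bldc$ and $\bldc'$ are at distance $2$ and have a second common neighbour $\bldw'$. If $\bldw'\notin\hat{\cC}$, it is a non-codeword with two codeword neighbours, contradicting $s_{2,1}=1$. So $\bldw'\in \hat{\cC}$ and the component is the square $\bldc,\bldw,\bldc',\bldw'$.

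Each square contains two even and two odd words, so $\hat{\cC}_{\rm even}$ has size $M$ and contains $\mathbf{0}$.
Two even codewords at distance $2$ in different squares are impossible. Their common odd neighbour is either a codeword, which would then lie in two squares, or a non-codeword with two codeword neighbours; both are excluded. Therefore the distance structure of $\hat{\cC}_{\rm even}$ is: partners at distance $2$ inside a square, everything else at distance $\ge 4$.

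Every odd word is either in $\hat{\cC}$, and then lies in a square with even codewords, or is a non-codeword whose unique codeword neighbour is necessarily even. So $\hat{\cC}_{\rm even}$ $1$-covers all odd words.
Puncturing the last coordinate then gives a length-$n$ code of size $M=2^n/n$ with covering radius $1$. This meets~(\ref{eq:vanWee}), so it is an NP1CC, and $\hat{\cC}_{\rm even}$ is its even extension, i.e.\ a zeroed ENP1CC.

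\textbf{Inverse maps.} By the square structure, the odd codewords of $\hat{\cC}$ are exactly the midwords of the partner pairs of $\hat{\cC}_{\rm even}$. Conversely, the even part of $\cC^*\cup\cM(\cC^*)$ is $\cC^*$. Hence the two maps are inverse to each other.

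The hard step is the even non-codeword case of the first direction; the counting plus the parity of $n+1$ is what I would rely on there. The zeroed condition is what pins the parity choice, so that the even part, and not the odd part, is taken as the ENP1CC.
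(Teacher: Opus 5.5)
Your proof is correct. For the direction from diamond codes to ENP1CCs you follow the paper's route: $2$-regularity together with $s_{2,1}=1$ splits the codewords of $\hat{\cC}$ into $4$-cycles. The even codewords then form the ENP1CC, and distance-$2$ pairs across cycles are excluded because they would produce a non-codeword with two codeword neighbours.

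The real difference is the other direction. The paper only asserts that $\cC^*\cup\cM(\cC^*)$ has quotient matrix $\Diamond$ and defers the verification to the companion work. You actually prove it, in three steps:
\begin{enumerate}
\item You count the $M(n+1)=2^n+M$ ball incidences on odd words. The $M$ midwords absorb exactly the excess, so every odd non-midword is covered exactly once.
\item You count the $M(n-1)=2^n-M$ incidences between midwords and even non-codewords. This reduces the even case to showing that each even non-codeword has at least one midword neighbour.
\item You close that case by parity. Without a midword neighbour, the $n+1$ odd neighbours of $\bldx$ would be partitioned into pairs by the codewords at distance $2$, which is impossible because $n+1=2^r+1$ is odd.
\end{enumerate}
This is where the evenness of $n$ enters that direction, and it makes your argument self-contained where the paper's is not.

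You also make explicit two points the paper leaves implicit. First, the even part $1$-covers every odd word, so its puncturing has covering radius~$1$ and meets~(\ref{eq:vanWee}). The paper instead concludes ``thus $\cC^*$ is an ENP1CC'' directly from the distance structure; that conclusion does follow, but only through an unstated counting argument. Second, the two maps are mutually inverse, because the odd codewords of each square are exactly the midwords of its even pair.

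The paper's version buys brevity at the cost of an outsourced step; yours buys a complete, elementary proof of the bijection.
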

\begin{IEEEproof}
If $\cC^*$ is an ENP1CC and $\cM(\cC^*)$ is the set of its midwords, then we form a new code $\hat{\cC}= \cC^* \cup \cM (\cC^*)$.
It can be proved, and it will be done in~\cite{EKSS}, that the quotient matrix of $\hat{\cC}$ is $\Diamond$ and hence $\hat{\cC}$ is an diamond code.

Assume $\hat{\cC}$ is an $(n+1,2M)$ diamond code. By the quotient matrix $\Diamond$, each codeword has two neighbouring
codewords at distance one. Hence, we can order the set of codewords in disjoint cycles of even length, where two adjacent codewords in the cycles have
distance one. Let $\bldc_1$, $\bldc_2$, and $\bldc_3$ be three consecutive codewords on such a cycle,
i.e., $d(\bldc_1,\bldc_2)=d(\bldc_2,\bldc_3)=1$ and $d(\bldc_1,\bldc_3)=2$, which implies that there exists another word $\bldx$ such that
$d(\bldc_1,\bldx)=1$ and $d(\bldc_3,\bldx)=1$. If $\bldx$ is a non-codeword, then it is at distance one from two codewords, contradicting
the quotient matrix $\Diamond$ (each non-codeword is at distance one from exactly one codeword). If there exist two codewords
$\bldc_1$ and~$\bldc_2$ such that $d(\bldc_1,\bldc_2)=1$, with no other codewords at distance one from them, it contradicts
the quotient matrix $\Diamond$ (each codeword is at distance one from exactly two codewords).
Hence, all the cycles have length $4$, and each cycle has two codewords of even weight and two codewords of odd weight.
Thus, $\hat{\cC}$ has $M$ codewords of even weight and $M$ codewords of odd weight.

Therefore, we can form a code~$\cC^*$ from the codewords of even weight of $\hat{\cC}$.
In~$\cC^*$, there are no two codewords from different cycles of~$\hat{\cC}$ whose distance is two, since this will imply a non-codeword at distance one
from two codewords of~$\hat{\cC}$, contradicting the definition of the quotient matrix.
Thus, $\cC^*$ is an $(n+1,M)$ ENP1CC.
\end{IEEEproof}

Similar to Proposition~\ref{prop:OTO_ENP_CR} we can prove one-to-one correspondence between diamond codes and ENP1CCs
with various combinations of codewords with the smallest weight.

\begin{corollary}
\label{cor:CodeCycles}
The codewords of an $(n+1,2M)$ diamond code form cycles of length $4$, where each cycle contains one codeword of weight~$k$, for some $0 \leq k \leq n-1$,
two codewords of weight $k+1$ and one codeword of weight $k+2$. Each codeword not on a given cycle is at distance at least $3$ from
the codewords of the cycle.
\end{corollary}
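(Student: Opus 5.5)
The plan is to work directly from the quotient matrix $\Diamond$ of an $(n+1,2M)$ diamond code $\hat{\cC}$. The proof of Proposition~\ref{prop:OTO_ENP_CR} already shows that the codewords split into disjoint $4$-cycles. I will first rebuild this cleanly: each codeword has exactly two codeword neighbours, so the graph induced on $\hat{\cC}$ by distance one is $2$-regular, hence a disjoint union of cycles. These cycles are even because the hypercube is bipartite. Then I show every such cycle has length exactly $4$, by examining three consecutive codewords $\bldc_1,\bldc_2,\bldc_3$ as in that proof.

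For the length-$4$ step, $d(\bldc_1,\bldc_3)=2$, so there are exactly two common neighbours of $\bldc_1$ and $\bldc_3$: $\bldc_2$ and another word $\bldx$. If $\bldx\notin\hat{\cC}$, then $\bldx$ has two codeword neighbours, contradicting $s_{2,1}=1$. Hence $\bldx\in\hat{\cC}$. Since $\bldc_1$ has only two codeword neighbours, and they are $\bldc_2$ and $\bldx$, the cycle through $\bldc_1,\bldc_2,\bldc_3$ closes as $\bldc_1\bldc_2\bldc_3\bldx$.

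Next comes the weight structure. A $4$-cycle in the hypercube is obtained from a word $\bldy$ by flipping two fixed coordinates $i\neq j$ in all four ways. The four words therefore agree outside $\{i,j\}$ and take all four values $00,01,10,11$ on $(i,j)$. If the common part has weight $k$, the weights are $k$, $k+1$, $k+1$, $k+2$. Since the length is $n+1$, the constraint $k+2\le n+1$ gives $0\le k\le n-1$.

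The last claim, that any codeword $\bldc''$ off the cycle is at distance at least $3$ from every cycle codeword, is the main obstacle. I handle it by cases on the distance.
\begin{itemize}
\item Distance $1$: if $d(\bldc'',\bldc)=1$ for a cycle word $\bldc$, then $\bldc$ has three codeword neighbours, contradicting $s_{1,1}=2$.
\item Distance $2$: if $d(\bldc'',\bldc)=2$, consider the two common neighbours of $\bldc''$ and $\bldc$. Neither is a codeword, because a codeword neighbour of $\bldc$ lies on the cycle, and the cycle would then contain a codeword neighbour of $\bldc''$, giving the distance-$1$ case. So both common neighbours are non-codewords adjacent to the two codewords $\bldc$ and $\bldc''$, contradicting $s_{2,1}=1$.
\end{itemize}
This exhausts the possibilities, so every other codeword is at distance at least $3$.
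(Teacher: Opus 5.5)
Your proposal is correct and takes essentially the same route as the paper, whose justification for this corollary is the argument in the proof of Proposition~\ref{prop:OTO_ENP_CR}. There, the distance-one graph on $\hat{\cC}$ is $2$-regular, the second common neighbour of $\bldc_1$ and $\bldc_3$ must be a codeword because $s_{2,1}=1$, and codewords on different cycles at distance two would force a non-codeword adjacent to two codewords. You are a bit more explicit than the paper: you derive the weight pattern $k,k+1,k+1,k+2$ from the two flipped coordinates, and you rule out that a common neighbour in the distance-two case is itself a codeword.
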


While in Section~\ref{sec:weight_distribute} complete weight distributions for the five families are given, we can already
present two results on the weight distribution of diamond codes. These two results will be proved in~\cite{EKSS}.

\begin{theorem}
\label{thm:WD_SR}
Zeroed diamond codes have a unique weight distribution and non-zeroed translates of diamond codes also have a unique
weight distribution.
\end{theorem}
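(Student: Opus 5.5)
The plan is to use the standard machinery for completely regular codes: the weight distribution with respect to any word $\bldx$ is determined by the quotient matrix and by whether $\bldx$ is a codeword. Let $\hat{\cC}$ be an $(n+1,2M)$ diamond code of length $m=n+1$, where $m$ and the size are fixed by Lemma~\ref{lem:paramSCR}. Let $f$ be its characteristic function. Since $\hat{\cC}$ is completely regular with quotient matrix $\Diamond$, the vector $f$ lies in the span of the all-one vector and an eigenvector of the adjacency matrix of the hypercube. The eigenvalues of $\Diamond$ are $m$ and $2-1=1$; the latter is obtained from $\mathrm{tr}\,\Diamond = m+1$. So $f - \frac{2M}{2^m}\mathbf{1} = f - \frac{2}{m-1}\mathbf{1}$ is an eigenvector of the hypercube $Q_m$ with eigenvalue $1 = m-2i$, i.e.\ $i=(m-1)/2 = 2^{r-1}$. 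Equivalently, its Fourier (Walsh) transform is supported only on vectors of weight $w=2^{r-1}$.

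The key step is to express $A_j(\bldx)=|\{\bldc\in\hat{\cC}: d(\bldx,\bldc)=j\}|$ as $\sum_{\bldy} f(\bldy)\,[d(\bldx,\bldy)=j]$. Write the indicator of the distance-$j$ sphere through Krawtchouk polynomials. Then
$$A_j(\bldx)=\frac{2}{m-1}\binom{m}{j}+\frac{K_j(w)}{\binom{m}{w}}\sum_{\bldy:\,wt(\bldy)=w}\hat g(\bldy)(-1)^{\langle \bldx,\bldy\rangle}2^{-m},$$
where $g=f-\frac{2}{m-1}\mathbf{1}$. More cleanly: the sphere operator acts on the eigenspace $V_w$ as the scalar $K_j(w)$. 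Since $g$ is its own projection onto $V_w$, and the eigenspace projection is preserved by the distance-$i$ sphere operator $S_j$, we get $S_j g = \frac{K_j(w)}{K_1(w)} S_1 g$. Now $(S_1 g)(\bldx)=K_1(w)g(\bldx)=g(\bldx)$, since $K_1(w)=m-2w=1$. Hence
$$A_j(\bldx)=\frac{2}{m-1}\binom{m}{j}+K_j(w)\Bigl(f(\bldx)-\frac{2}{m-1}\Bigr).$$
This depends only on $f(\bldx)\in\{0,1\}$. Taking $\bldx=\mathbf{0}$ gives the statement: one closed formula for zeroed diamond codes, and one for every translate $\bldx+\hat{\cC}$ not containing $\mathbf{0}$. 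Both formulas are explicit in terms of $K_j(2^{r-1})$.

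The main obstacle is justifying the eigenvector claim cleanly. To do this I would verify directly that $(S_1 f)(\bldx)$ equals $s_{1,1}=2$ for codewords and $s_{2,1}=1$ for non-codewords. Hence $S_1 f = f + \mathbf{1}$, so $S_1 g = g + \mathbf{1} - \frac{2m}{m-1}\mathbf{1} + \frac{2}{m-1}\mathbf{1} = g$, using $S_1\mathbf{1}=m\mathbf{1}$. Thus $g$ is a $1$-eigenvector of $S_1$. Since $S_j$ is a polynomial in $S_1$ (the Bose--Mesner algebra of the hypercube), $S_j g = K_j(w)\,g$ follows without any Fourier computation. This avoids heavy tools. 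It also gives closed expressions in the form needed for Section~\ref{sec:weight_distribute}. If desired, Corollary~\ref{cor:CodeCycles} can be used to sanity-check small weights, e.g.\ $A_1=2$ and $A_2=1$ for zeroed codes.
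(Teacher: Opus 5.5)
\textbf{The density constant is wrong, so the key step fails as written.} Lemma~\ref{lem:paramSCR} gives $2^m=(m-1)\cdot 2M$, so the density is $\frac{2M}{2^m}=\frac{1}{m-1}$, not $\frac{2}{m-1}$. With your $c=\frac{2}{m-1}$ you get
$S_1g=f+\mathbf{1}-cm\mathbf{1}=g+\bigl(1-c(m-1)\bigr)\mathbf{1}=g-\mathbf{1}$.
Consequences:
\begin{itemize}
\item Your displayed identity ending in ``$=g$'' is false, and $g$ is not an eigenvector.
\item The final formula is off. It predicts $A_1=3$ for a zeroed code, and $A_2=m+1$ at a non-codeword instead of $\frac{n}{2}+1$. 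The sanity check you propose via Corollary~\ref{cor:CodeCycles} would have caught this.
\end{itemize}
The repair is one line. Since $S_1f=f+\mathbf{1}$, the vector $f-c\mathbf{1}$ is a $1$-eigenvector exactly when $c=\frac{1}{m-1}$. Then $g\perp\mathbf{1}$, which even reproves $|\hat{\cC}|=2^m/(m-1)$. You get
\[
A_j(\bldx)=\frac{1}{m-1}\binom{m}{j}+K_j(n/2)\Bigl(f(\bldx)-\frac{1}{m-1}\Bigr).
\]
This gives $A_1=2$, $A_2=1$ for zeroed codes and $A_1=1$, $A_2=\frac{n}{2}+1$ for non-zeroed translates, matching the values used in the proof of Theorem~\ref{thm:WD_CR_E}. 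Separately, drop your first Fourier display: the factor $1/\binom{m}{w}$ there is spurious, and you do not need that display.

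\textbf{With that fix, your argument is a correct proof, by a different route from the paper's.} The paper does not prove the theorem here; it defers the proof to \cite{EKSS}. Its emphasis on avoiding heavy tools, and its remark that $\cA_0(\hat{\cC})=1$ ``implies'' $\cA_1(\hat{\cC})=2$ and $\cA_2(\hat{\cC})=1$, suggest an elementary weight recursion in the style of \cite{EtVa94}. Summing $S_1f=f+\mathbf{1}$ over all words of weight $i$ gives $(i+1)A_{i+1}+(m-i+1)A_{i-1}=A_i+\binom{m}{i}$, which determines every $A_i$ from $A_0\in\{0,1\}$.

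You use the same local identity but propagate it through the algebra generated by $S_1$. The one extra fact you need, $S_jg=K_j(n/2)\,g$, follows by induction from the three-term relation $S_1S_j=(j+1)S_{j+1}+(m-j+1)S_{j-1}$ applied to $g$, so nothing heavy is really required. Your version gives a single closed formula, in Krawtchouk values, for the distance distribution from every word at once. The recursion stays at the level of weight counts, and the paper then obtains its closed forms in terms of the perfect-code quantities $\Delta_i$ through Corollary~\ref{cor:CReqENP}, not through $K_j$.
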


\begin{lemma}
\label{lem:Mid_comp}
An ENP1CC $\cC^*$ and the set $\cM (\cC^*)$ of its midwords are complement to each other.
\end{lemma}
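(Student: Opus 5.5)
The approach is to read ``complement to each other'' as $\cM(\cC^*) = \mathbf{1} + \cC^*$, where $\mathbf{1}$ is the all-one word of length $m=n+1$. Both sets have size $M$: there are $M/2$ pairs of partners, each contributing two midwords. Parity is also consistent: $m=2^r+1$ is odd, so complementing an even-weight codeword gives an odd-weight word, and midwords have odd weight. Instead of tracking midwords combinatorially, I will work with the diamond code $\hat{\cC}=\cC^*\cup\cM(\cC^*)$. By Proposition~\ref{prop:OTO_ENP_CR} (the direction deferred to~\cite{EKSS}, which we may assume), it has quotient matrix $\Diamond$. The goal is then to show $\hat{\cC}=\mathbf{1}+\hat{\cC}$. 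Once that holds, the map $\bldx\mapsto \mathbf{1}+\bldx$ swaps parity and so carries the even part $\cC^*$ of $\hat{\cC}$ onto its odd part, which is $\cM(\cC^*)$ by Corollary~\ref{cor:CodeCycles}.

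To prove the invariance I will use the standard spectral description of completely regular codes with covering radius~$1$. Let $f$ be the indicator function of $\hat{\cC}$ and $A$ the adjacency operator of the hypercube $Q_m$, so $(Af)(\bldx)$ counts the neighbours of $\bldx$ lying in $\hat{\cC}$. The quotient matrix gives $(Af)(\bldx)=2$ for $\bldx\in\hat{\cC}$ and $(Af)(\bldx)=1$ otherwise. Hence $Af = f + \mathbf{j}$, with $\mathbf{j}$ the constant-one function. The eigenvalues of $\Diamond$ are $m$ and $1$ (trace $m+1$, determinant $2(m-1)-(m-2)=m$). Accordingly, set $g = f - c\,\mathbf{j}$ with $c=M'/2^m=1/(m-1)$. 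Since $A\mathbf{j}=m\mathbf{j}$, a direct check gives $Ag = g$.

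Next I expand in characters $\chi_{\bldy}(\bldx)=(-1)^{\bldx\cdot\bldy}$, which satisfy $A\chi_{\bldy}=(m-2\,\mathrm{wt}(\bldy))\chi_{\bldy}$. So $g$ is a combination of characters with $m-2\,\mathrm{wt}(\bldy)=1$, i.e.\ $\mathrm{wt}(\bldy)=(m-1)/2=n/2=2^{r-1}$. For every such $\bldy$ we have $\chi_{\bldy}(\mathbf{1}+\bldx)=(-1)^{\mathrm{wt}(\bldy)}\chi_{\bldy}(\bldx)=\chi_{\bldy}(\bldx)$, because $2^{r-1}$ is even for $r\ge 2$. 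Thus $g$, and hence $f=g+c\,\mathbf{j}$, is invariant under complementation, giving $\hat{\cC}=\mathbf{1}+\hat{\cC}$ and finishing the argument as described above.

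The main obstacle is that the whole argument rests on $\hat{\cC}$ actually being a diamond code, which the paper postpones to~\cite{EKSS}. If a self-contained argument is wanted, I would verify the quotient matrix directly. A midword has exactly two codeword neighbours in $\hat{\cC}$, namely its two partners, since other codewords of $\cC^*$ are at distance at least $4$ from them; it has no midword neighbours, by parity. A non-codeword has exactly one neighbour in $\hat{\cC}$, by counting with $2^m=(m-1)M'$ as in Lemma~\ref{lem:paramSCR} together with the covering property of the NP1CC. The character/eigenvalue step itself is routine. The only delicate point is the parity of $n/2$, which is exactly where the hypothesis $r\ge 2$ enters.
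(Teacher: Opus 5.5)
Your proof is correct. It necessarily takes a different route from the paper, because the paper gives no proof of this lemma: it is deferred to \cite{EKSS}, as is the fact, used in Proposition~\ref{prop:OTO_ENP_CR}, that $\cC^*\cup\cM(\cC^*)$ has quotient matrix $\Diamond$.

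Your spectral step checks out. From $Af=f+\mathbf{j}$, the function $g=f-\frac{1}{m-1}\mathbf{j}$ lies in the eigenvalue-$1$ eigenspace. That eigenspace is spanned by the characters $\chi_{\bldy}$ with $\mathrm{wt}(\bldy)=n/2=2^{r-1}$, and these are invariant under $\bldx\mapsto\mathbf{1}+\bldx$ precisely because $r\ge 2$.

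What your route buys is a stronger statement: every diamond code of length $2^r+1$ is closed under complementation. After that, the lemma is pure parity. If $\cC^*$ is an odd-weight code or an odd translate, simply swap the roles of the even and odd parts.

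Your argument also uses only the quotient matrix and no weight-distribution information. This matters here. In the paper, this lemma is one of the inputs to Corollary~\ref{cor:CReqENP} and to the diamond-code weight distributions. Deriving it instead from $\cA_n(\cC^*)=2$ for zeroed codes plus translation would therefore risk circularity.

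One step is not justified as written, in your optional self-contained check of the quotient matrix. Take $\bldx\notin\hat{\cC}$ with the \emph{same} parity as $\cC^*$. The covering property of the NP1CC only gives $d(\bldx,\cC^*)=2$; it does not give adjacency to a midword. The count $2^m-2M=2M(m-2)$ only shows that such words have, on average, one neighbour in $\cM(\cC^*)$.

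You need a local parity count to close this:
\begin{itemize}
\item Every neighbour of $\bldx$ has the opposite parity. By covering together with uniqueness of partners, it is therefore adjacent to exactly one codeword of $\cC^*$, or to exactly two if it is a midword.
\item Let $k$ be the number of midword neighbours of $\bldx$, and let $N_2$ be the number of codewords of $\cC^*$ at distance $2$ from $\bldx$. Double counting through these codewords gives $m+k=2N_2$.
\item Since $m$ is odd, $k$ is odd, so $k\ge 1$. The average then forces $k=1$.
\end{itemize}
You should also state explicitly that each codeword of $\cC^*$ is adjacent in $\hat{\cC}$ to exactly its own two midwords. With this patch your proof no longer depends on \cite{EKSS} at all.
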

%

Proposition~\ref{prop:OTO_ENP_CR}, Theorem~\ref{thm:WD_SR}, and Lemma~\ref{lem:Mid_comp}
induce the connections between zeroed and non-zeroed $(n+1,M)$
ENP1CCs and zeroed and non-zeroed $(n+1,2M)$ diamond codes. The analysis is straightforward and hence omitted.

\begin{corollary}
\label{cor:CReqENP}
$~$
\begin{itemize}
\item A zeroed $(n+1,2M)$ diamond code $\hat{\cC}$ is obtained from a zeroed $(n+1,M)$ ENP1CC $\cC^*$ by adding to $\cC^*$ all the midwords of~$\cC^*$,
i.e., $\hat{\cC} = \cC^* \cup \cM(\cC^*)$ or by adding the complements of $\cC^*$, i.e., $\hat{\cC} = \{ \bar{\bldc} ~:~ \bldc \in \cC^* \}$.

\item A non-zeroed $(n+1,2M)$ diamond code $\hat{\cC}$ is obtained from a non-zeroed $(n+1,M)$ ENP1CC $\cC^*$ with one codeword of weight one by
adding to $\cC^*$ all the midwords of~$\cC^*$, or by adding the complements of $\cC^*$.

\item A zeroed $(n+1,M)$ ENP1CC $\cC^*$ is obtained from a zeroed $(n+1,M)$ diamond code $\hat{\cC}$ by taking
all the even weight codewords of a zeroed $(n+1,2M)$ diamond.

\item A non-zeroed $(n+1,M)$ ENP1CC with two codewords of weight one is obtained from the odd weight codewords of a zeroed $(n+1,2M)$ diamond code.

\item A non-zeroed $(n+1,M)$ ENP1CC with one codeword of weight one is obtained from the odd weight codewords of a non-zeroed $(n+1,2M)$ diamond code.

\item A non-zeroed $(n+1,M)$ ENP1CC with even weight codewords is obtained from the even weight codewords of a non-zeroed $(n+1,2M)$ diamond code.
\end{itemize}
\end{corollary}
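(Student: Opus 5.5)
Each bullet of Corollary~\ref{cor:CReqENP} will be read off from three facts: the structure of a diamond code in Corollary~\ref{cor:CodeCycles}, the correspondence in Proposition~\ref{prop:OTO_ENP_CR}, and Lemma~\ref{lem:Mid_comp}. The organizing observation is that a diamond code $\hat{\cC}$ is a disjoint union of $4$-cycles, each with one word of weight $k$, two of weight $k+1$ and one of weight $k+2$. Every cycle therefore has two even-weight words, which form a pair of partners, and two odd-weight words, which are their midwords and also a pair of partners. So both the even part $\hat{\cC}_{\rm ev}$ and the odd part $\hat{\cC}_{\rm odd}$ are sets of $M$ words. By the argument in the proof of Proposition~\ref{prop:OTO_ENP_CR}, each is an ENP1CC, after complementing the last coordinate in the odd case if one insists on even weight. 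Each is also the midword set of the other: $\hat{\cC}_{\rm odd}=\cM(\hat{\cC}_{\rm ev})$ and $\hat{\cC}_{\rm ev}=\cM(\hat{\cC}_{\rm odd})$.

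Next I will apply Lemma~\ref{lem:Mid_comp}. Since $\cM(\cC^*)$ is the set of complements of $\cC^*$, we get $\cC^*\cup\cM(\cC^*)=\cC^*\cup\{\bar{\bldc}:\bldc\in\cC^*\}$. This gives the two equivalent descriptions in the first two bullets. Note that the literal formula in the first bullet should be read as $\cC^*\cup\{\bar\bldc\}$. Combined with Proposition~\ref{prop:OTO_ENP_CR}, every diamond code arises in exactly this way from either of its parity classes.

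The remaining work is the bookkeeping of the smallest weights, organized by the cycle through, or nearest to, $\mathbf{0}$. If $\hat{\cC}$ is zeroed, the cycle containing $\mathbf{0}$ has $k=0$. Its even words are $\mathbf{0}$ and a word of weight $2$, and its odd words are two words of weight $1$. By the last clause of Corollary~\ref{cor:CodeCycles}, every other codeword has weight at least $3$. Hence $\hat{\cC}_{\rm ev}$ is a zeroed ENP1CC and $\hat{\cC}_{\rm odd}$ is a non-zeroed ENP1CC with exactly two words of weight one, which gives bullets three and four.

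If $\hat{\cC}$ is non-zeroed, then by Lemma~\ref{lem:SCRcover} the word $\mathbf{0}$ is at distance $1$ from exactly one codeword, of weight $1$ (the quotient matrix gives $s_{2,1}=1$). That codeword lies on a cycle with $k=1$, whose words have weights $1,2,2,3$. All other codewords have weight at least $2$, and in fact no other weight-$1$ word exists because $s_{2,1}=1$. So $\hat{\cC}_{\rm odd}$ contains exactly one word of weight one, and $\hat{\cC}_{\rm ev}$ is non-zeroed with all weights even. This gives bullets five and six. Conversely, starting from an ENP1CC of each kind and forming $\cC^*\cup\cM(\cC^*)$ produces a diamond code of the matching kind, by the same local inspection near $\mathbf{0}$. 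This gives the "obtained from" direction of the second bullet.

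The main obstacle is making sure the weight-one counts are exact. In particular, I must rule out a weight-one codeword lying on a different cycle from the one through $\mathbf{0}$. For this I will rely on the facts that $\mathbf{0}$, or its unique codeword neighbour, sees exactly the number of codeword neighbours prescribed by $\Diamond$, and that distinct cycles are at distance at least $3$.
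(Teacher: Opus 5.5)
Your proposal is correct, and it differs from the paper mainly in how the low-weight bookkeeping is justified. The paper omits the proof but names its ingredients as Proposition~\ref{prop:OTO_ENP_CR}, Lemma~\ref{lem:Mid_comp}, and Theorem~\ref{thm:WD_SR}. So it leans on the deferred uniqueness of the weight distributions of zeroed and non-zeroed diamond codes to decide which parity class contains $\mathbf{0}$, one word of weight one, or two words of weight one. You get these counts directly from local structure instead:
\begin{itemize}
\item in the zeroed case, from the $4$-cycle of Corollary~\ref{cor:CodeCycles} through $\mathbf{0}$, with every other codeword at distance at least $3$;
\item in the non-zeroed case, from $s_{2,1}=1$ at the non-codeword $\mathbf{0}$, which forces exactly one weight-one codeword, lying on a cycle of weights $1,2,2,3$.
\end{itemize}

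Your route is more elementary and self-contained. It needs neither Theorem~\ref{thm:WD_SR} nor any global weight information. It also makes the converse direction transparent: you form $\cC^*\cup\cM(\cC^*)$ and check whether $\mathbf{0}$ is a codeword, a midword, or neither. The paper's route instead ties the correspondence to full weight distributions, which is what Theorem~\ref{thm:WD_CR_E} exploits immediately afterwards. Your reading of the first bullet as $\hat{\cC}=\cC^*\cup\{\bar{\bldc}:\bldc\in\cC^*\}$ correctly fixes a typo in the statement.

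One point deserves to be stated explicitly. The first half of Proposition~\ref{prop:OTO_ENP_CR} and Lemma~\ref{lem:Mid_comp} are stated for ENP1CCs, but you apply them to translates, including the odd-weight ones in bullets two, four and five. This transfer is valid because translating by a vector $v$ maps midwords to midwords and complements to complements, and it preserves the quotient matrix.
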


\section{Weight Distributions for the Five Families}
\label{sec:weight_distribute}

In this section, we derive closed expressions of the weight distribution for the five families of codes considered in this paper and
for their related translates. Some of these weight distributions are known and were derived before; for some, the weight enumerator
was computed, and some were never computed and published before. As the weight distribution of codes and their translates
is one of the most important parameters of a code, it is worthwhile to have all the weight distributions of these five
families of codes in one place. This section does exactly this, it brings all these weight distributions in closed expressions together
into one place. The first result can be deduced from Corollary~\ref{cor:CReqENP}.

\begin{theorem}
\label{thm:WD_CR_E}
$~$
\begin{itemize}
\item There is a unique weight distribution for zeroed diamond codes and a unique weight distribution for non-zeroed diamond codes.

\item There is a unique weight distribution for zeroed ENP1CCs. There are three possible weight distributions a non-zeroed
ENP1CC $\cC^*$, one for which $\cA_0 (\cC^*)=0$, $\cA_1 (\cC^*)=1$, and $\cA_2 (\cC^*)=0$,
one for which  $\cA_0 (\cC^*)=0$, $\cA_1 (\cC^*)=2$, and $\cA_2 (\cC^*)=0$, and one for which $\cA_0 (\cC^*)=0$, $\cA_1 (\cC^*)=0$,
and $\cA_2 (\cC^*)=\frac{n}{2} +1$.
\end{itemize}
\end{theorem}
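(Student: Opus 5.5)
The first bullet restates Theorem~\ref{thm:WD_SR}, which I will assume. The second bullet I will reduce to the first via Corollary~\ref{cor:CReqENP}, which writes every ENP1CC as the even-weight or the odd-weight part of a diamond code. Since any two zeroed diamond codes have the same weight distribution $\cA(\hat{\cC})$, the even-weight part has weight distribution $(\cA_i(\hat{\cC}))_{i \text{ even}}$ and the odd part $(\cA_i(\hat{\cC}))_{i \text{ odd}}$. Each of these is therefore determined, and similarly for non-zeroed diamond codes. This already shows there are at most four weight distributions for ENP1CCs: zeroed; odd part of a zeroed diamond code; odd part of a non-zeroed one; even part of a non-zeroed one. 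It remains to pin down $\cA_0,\cA_1,\cA_2$ in each of the three non-zeroed cases.

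\emph{Zeroed diamond codes.} By Corollary~\ref{cor:CodeCycles}, the all-zero word lies on a $4$-cycle with one word of weight~$0$, two of weight~$1$ and one of weight~$2$. Every other codeword has distance at least $3$ from this cycle, so in particular it has weight at least~$3$. Hence $\cA_0=1$, $\cA_1=2$, $\cA_2=1$. The odd part then has $\cA_0=0$, $\cA_1=2$, $\cA_2=0$, matching the second listed case.

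\emph{Non-zeroed diamond codes.} By Lemma~\ref{lem:SCRcover} and the quotient matrix $\Diamond$, the all-zero word, being a non-codeword, has exactly one codeword neighbour, so $\cA_0=0$ and $\cA_1=1$. That weight-one codeword $\mathbf{e}_i$ lies on a cycle $\mathbf{e}_i,\ \mathbf{e}_i+\mathbf{e}_j,\ \mathbf{e}_i+\mathbf{e}_k,\ \mathbf{e}_i+\mathbf{e}_j+\mathbf{e}_k$. This cycle cannot contain $\mathbf{0}$, so $j,k\neq i$, and its words have weights $1,2,2,3$. I then count $\cA_2$. Every weight-two codeword not on this cycle lies on a cycle whose minimum-weight word has weight $\ge 2$; by Corollary~\ref{cor:CodeCycles} with $k=2$, such a cycle contributes exactly one weight-two word. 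To count these I will use a double count of neighbours: each of the $n$ weight-one non-codewords $\mathbf{e}_l$ ($l\neq i$) has exactly one codeword neighbour, which is either $\mathbf{0}$ (excluded) or has weight~$2$. Each weight-two codeword is adjacent to two weight-one words.
\begin{itemize}
\item The two weight-two codewords on the cycle of $\mathbf{e}_i$ each have exactly one weight-one non-codeword neighbour, namely $\mathbf{e}_j$ and $\mathbf{e}_k$.
\item Every other weight-two codeword has two such neighbours.
\end{itemize}
This gives $n = 2 + 2(\cA_2-2)$, i.e.\ $\cA_2 = \frac{n}{2}+1$. The odd part therefore has $\cA_0=0$, $\cA_1=1$, $\cA_2=0$, and the even part has $\cA_0=0$, $\cA_1=0$, $\cA_2=\frac n2+1$. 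These are exactly the three listed cases.

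Finally, I will check that all four distributions genuinely occur, and that the three non-zeroed ones are pairwise distinct; the latter is clear from their $(\cA_0,\cA_1,\cA_2)$ values. For existence, I will take a zeroed ENP1CC, such as the extension of a code from Theorem~\ref{thm:simpConst}, and form its diamond code via Corollary~\ref{cor:CReqENP}. Translating this diamond code by a weight-one word $\mathbf{e}$ of a cycle-adjacent position gives a non-zeroed diamond code, so all parts exist. The main obstacle is the $\cA_2$ count in the non-zeroed case, specifically confirming that no weight-one non-codeword is adjacent to $\mathbf{0}$ as its codeword neighbour. This holds because $\mathbf{0}$ is not a codeword, which the double count above relies on.
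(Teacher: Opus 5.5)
Your argument is correct and follows the paper's route: uniqueness comes from Theorem~\ref{thm:WD_SR}, and each ENP1CC is realized as the even or odd part of a zeroed or non-zeroed diamond code via Proposition~\ref{prop:OTO_ENP_CR} and Corollary~\ref{cor:CReqENP}. What you add is an actual derivation of $(\cA_0,\cA_1,\cA_2)$, using Corollary~\ref{cor:CodeCycles} in the zeroed case and the neighbour double count $n=2+2(\cA_2-2)$ in the non-zeroed case, where the paper only refers to omitted proofs. One caution for your existence remark: the translating weight-one word must be a \emph{non-codeword} $\mathbf{e}_l$, with $l$ outside the support of the weight-two codeword on the cycle through $\mathbf{0}$. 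Translating by a cycle neighbour of $\mathbf{0}$ gives back a zeroed diamond code.
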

\begin{IEEEproof}
$~$
\begin{itemize}
\item The claims regarding diamond codes are taken from Theorem~\ref{thm:WD_SR}.

\item By Proposition~\ref{prop:OTO_ENP_CR} there is a one-to-one correspondence between the set of ENP1CCs and the set
of diamond codes. A zeroed diamond code $\hat{\cC}$ has a unique weight distribution, where $\cA_0 (\hat{\cC}) =1$
which implies that $\cA_1 (\hat{\cC}) =2$, and $\cA_2 (\hat{\cC}) =1$ (see the proofs of Theorem~\ref{thm:WD_SR} and Corollary~\ref{cor:CReqENP}).
This implies a zeroed $(n+1,M)$ ENP1CC $\cC^*$ for which $\cA_0 (\cC^*)=1$, $\cA_1 (\cC^*)=0$, and $\cA_2 (\cC^*)=1$
and a translated non-zeroed $(n+1,M)$ ENP1CC $\cC^*$ for which $\cA_0 (\cC^*)=0$, $\cA_1 (\cC^*)=2$, and $\cA_2 (\cC^*)=0$.
A non-zeroed diamond code $\hat{\cC}$ also has a unique weight distribution, where by the proofs of Theorem~\ref{thm:WD_SR}
and Corollary~\ref{cor:CReqENP},
$\cA_0 (\hat{\cC})=0$, $\cA_1 (\hat{\cC})=1$, and $\cA_2 (\hat{\cC})=\frac{n}{2} +1$.
This implies a translated non-zeroed $(n+1,M)$ ENP1CC $\cC^*$ for which
$\cA_0 (\cC^*)=0$, $\cA_1 (\cC^*)=1$, and $\cA_2 (\cC^*)=0$; and another translated $(n+1,M)$ ENP1CC $\cC^*$ for which
$\cA_0 (\cC^*)=0$, $\cA_1 (\cC^*)=0$, and $\cA_2 (\cC^*)=\frac{n}{2} +1$. Thus, the claim of the theorem is proved.
\end{itemize}
\end{IEEEproof}

\begin{theorem}
\label{thm:uniqueNP1CC}
There is a unique weight distribution for zeroed $(n,M)$ NP1CCs of \Type{\TA} which can also be the
weight distribution of a zeroed NP1CC of \Type{\TC}.
There is a unique weight distribution for zeroed $(n,M)$ NP1CCs of \Type{\TB} which can also be the
weight distribution of a zeroed NP1CC of \Type{\TC}.

There is a unique weight distribution for non-zeroed $(n,M)$ NP1CCs of \Type{\TA} which can
also be the weight distribution of non-zeroed $(n,M)$ NP1CCs of \Type{\TB} or of \Type{\TC} when they contain exactly one word of weight one.
There is a weight distribution for non-zeroed $(n,M)$ NP1CCs of \Type{\TB} and
for non-zeroed $(n,M)$ NP1CCs of \Type{\TC} when they contain two words of weight one.
\end{theorem}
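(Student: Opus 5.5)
The plan is to reduce every claim to Theorem~\ref{thm:WD_CR_E} by puncturing. Fix a zeroed or non-zeroed $(n,M)$ NP1CC $\cC$ and form $\cC^*$ by appending the even parity bit. The weight distributions are related by
\[
\cA_{2i}(\cC^*)=\cA_{2i}(\cC)+\cA_{2i-1}(\cC),
\]
and conversely $\cA_{2i}(\cC)$ and $\cA_{2i-1}(\cC)$ are the numbers of weight-$2i$ words of $\cC^*$ with last coordinate $0$ and $1$, respectively. So the task is to split, for each even weight $w$, the weight-$w$ codewords of $\cC^*$ into those with last bit $0$ and those with last bit $1$.

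\textbf{Step 1 (the split for Type~\TA).} By Lemma~\ref{lem:TypeA}, a Type~\TA\ code is $\cC=E_0\cup E_1$, where $E_0$ is an even translate and $E_1$ an odd translate of extended perfect codes of length $n$. If $\cC$ is zeroed, $E_0$ is a zeroed extended perfect code. $E_1$ is at distance $1$ from it, so after a coordinate permutation $E_1=e_j+E_0'$ with $E_0'$ zeroed. The weight distributions of zeroed extended perfect codes, and of their weight-one translates, are classical. They also follow from the diamond/ENP1CC correspondence, since extended perfect codes are Type~\TA\ slices. This gives $\cA(\cC)$ in closed form and hence uniqueness.

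\textbf{Step 2 (the split for Type~\TB).} For a Type~\TB\ code I would use the local structure of Corollary~\ref{cor:CodeCycles}. The weight-$w$ codewords of $\cC^*$ come in partner pairs at distance $2$, and in $\cC$ each pair stays at distance $2$. Hence each pair has both last bits equal, so each pair lies entirely within one parity class of $\cC$. To determine how many codewords of each weight carry last bit $1$, I would use the coordinate-averaging identity: summing over the $n+1$ coordinates of $\cC^*$, the number of weight-$w$ codewords with a $1$ in a coordinate totals $w\,\cA_w(\cC^*)$.

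The counting argument needs every coordinate to behave the same. Uniqueness of the diamond weight distribution in each translate (Theorem~\ref{thm:WD_SR}) applies to the translate $e_{n+1}+\cC^*$, and it forces the count to be independent of the chosen coordinate. Therefore $\cA_{2i-1}(\cC)=\frac{2i}{n+1}\cA_{2i}(\cC^*)$ for the coordinate-uniform case. This uses only the fact that the translate by a weight-one vector falls into one of the three classes of Theorem~\ref{thm:WD_CR_E}. Which class it falls into depends on whether $e_{n+1}$ is a codeword, a partner neighbour, or a midword. That in turn is exactly the Type~\TA/\TB\ distinction for the partner of $\mathbf 0$.

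\textbf{Step 3 (classification and Type~\TC).} Type~\TC\ codes are classified by the type of the pair containing the low-weight codeword, namely $\mathbf 0$ or the weight-one word(s). Since the computation in Steps~1--2 depended only on which of the three translate classes of Theorem~\ref{thm:WD_CR_E} arises, a Type~\TC\ code inherits the Type~\TA\ or Type~\TB\ distribution according to that local pattern. For the non-zeroed case, the relevant feature is whether $\cC$ has one or two words of weight one, matching $\cA_1(\cC^*)\in\{1,2\}$ or $\cA_2(\cC^*)=\frac n2+1$.

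\textbf{Main obstacle.} The hard step is justifying the coordinate-independence in Step~2: the claim that the number of weight-$w$ codewords of $\cC^*$ with a $1$ in the last coordinate depends only on the translate class. I would first try to derive it from Theorem~\ref{thm:WD_SR} applied to the translate of $\hat{\cC}$ by $e_{n+1}$, using weight distributions of translates by $0$ and by $e_{n+1}$ to solve for the split. If that is not enough, I would fall back on a recursion in $w$ from the quotient-matrix counts.
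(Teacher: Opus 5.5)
There is a genuine gap in Step~2, and it carries over into Step~3. The coordinate-independence you need is false, so the formula $\cA_{2i-1}(\cC)=\frac{2i}{n+1}\cA_{2i}(\cC^*)$ is wrong. Take a zeroed ENP1CC $\cC^*$ in which the partner of $\mathbf{0}$ is $e_a+e_b$. The number of weight-$2$ codewords with a $1$ in coordinate $j$ is $1$ for $j\in\{a,b\}$ and $0$ otherwise. Puncturing therefore gives $\cA_1(\cC)\in\{0,1\}$, never $\frac{2}{n+1}$; for \Type{\TB} you get $0$. Theorem~\ref{thm:WD_SR} does not force uniformity. It does the opposite: $e_j+\cC^*$ has one word of weight one for $j\notin\{a,b\}$ and two for $j\in\{a,b\}$, which is the class dependence you point out yourself. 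The partner-pair observation does not help either. Partners in $\cC^*$ need not have equal weight (e.g.\ $\mathbf{0}$ and $e_a+e_b$), and in any case it yields no equation for the split.

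The missing idea is the one you list only as a fallback, and it is essentially the paper's whole proof. The translate $e_{n+1}+\cC^*$ is exactly the odd-parity extension of $\cC$. Its weight distribution gives $\cA_{i-1}(\cC)+\cA_i(\cC)$ for odd $i$, while $\cC^*$ gives the same sum for even $i$. Starting from $\cA_0(\cC)$, the recursion $\cA_i(\cC)=\cA_i(\text{extension of parity } i)-\cA_{i-1}(\cC)$ determines everything. By Theorem~\ref{thm:WD_CR_E}, each extension's distribution is fixed by its first values, and these depend only on $\cA_0(\cC)$ and $\cA_1(\cC)$.

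This argument never looks at the type of the code, which is precisely what yields the \Type{\TC} clauses. Your Step~1 is fine for \Type{\TA} itself, given the classical extended-perfect distributions. But it rests on the global decomposition of Lemma~\ref{lem:TypeA}, which has no \Type{\TC} analogue, so the ``inheritance'' claimed in Step~3 is unsupported.

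Your treatment of the non-zeroed case shows the same problem. For non-zeroed $\cC$ the even extension always has $\cA_1(\cC^*)=0$ and $\cA_2(\cC^*)=\frac{n}{2}+1$. The distinction between one and two words of weight one appears only in the odd extension, where $\cA_1=\cA_1(\cC)$. So that extension is indispensable, not optional.
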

\begin{IEEEproof}
Since each codeword in an NP1CC has a unique partner at either distance one or at distance two, it follows
that the first three possible values in the weight distribution of a zeroed $(n,M)$ NP1CC $\cC$ are:
\newline
$\cA_0(\cC) =1$, $\cA_1 (\cC)=1$, and $\cA_2 (\cC)=0$ --- will be denoted by WD1.
\newline
$\cA_0(\cC) =1$, $\cA_1 (\cC)=0$, and $\cA_2 (\cC)=1$ --- will be denoted by WD2.

The extended code of a non-zeroed $(n,M)$ NP1CC $\cC$
can have three weight distributions (see Theorem~\ref{thm:WD_CR_E}),
and hence the first three possible values in its weight distribution are:
\newline
$\cA_0(\cC) =0$, $\cA_1 (\cC)=1$, and $\cA_2 (\cC)=\frac{n}{2}$ --- will be denoted by WD3.
\newline
$\cA_0(\cC) =0$, $\cA_1 (\cC)=2$, and $\cA_2 (\cC)=\frac{n}{2} -1$ --- will be denoted by WD4.

Our goal is to prove that each one of these three values for the weight distribution determines it uniquely.
On the other hand, by Theorem~\ref{thm:WD_CR_E}, a zeroed $(n+1,M)$ ENP1CC $\cC^*$ has a unique weight distribution given by
\newline
$\cA_0(\cC^*) = 1$, $\cC_1(\cC^*)=0$, and $\cA_2(\cC^*)=1$ --- will be denoted by WD5.

Similarly, by Theorem~\ref{thm:WD_CR_E} a non-zeroed $(n+1,M)$ ENP1CC $\cC^*$ has exactly three possible
weight distributions and the first three possible values in these weight distributions are
\newline
$\cA_0(\cC^*) = 0$, $\cA_1(\cC^*)=1$, and $\cA_2(\cC^*)=0$ --- will be denoted by WD6.
\newline
$\cA_0(\cC^*) = 0$, $\cA_1(\cC^*)=2$, and $\cA_2(\cC^*)=0$ --- will be denoted by WD7.
\newline
$\cA_0(\cC^*) = 0$, $\cA_1(\cC^*)=0$, and $\cA_2(\cC^*)=\frac{n}{2} +1$ --- will be denoted by WD8.

There is a simple connection between the weight distribution of the NP1CC $\cC$ and the weight distribution
of its ENP1CC $\cC^*$ as follows. For even $i$, and even weight ENP1CC $\cC^*$ (zeroed or non-zeroed),
$\cA_0 (\cC^*) = \cA_0 (\cC)$, and for $i > 0$,
\vspace{-0.05cm}
\begin{equation}
\label{eq:evenEWD}
\cA_i (\cC^*) = \cA_{i-1} (\cC) + \cA_i (\cC).
\end{equation}
For odd $i$, and odd weight non-zeroed ENP1CC $\cC^*$,
\vspace{-0.05cm}
\begin{equation}
\label{eq:oddEWD}
\cA_i (\cC^*) = \cA_{i-1} (\cC) + \cA_i (\cC).
\end{equation}

Based on Equations~(\ref{eq:evenEWD}) and~(\ref{eq:oddEWD}), we analyze these eight weight distributions as follows.

WD1 yields only the odd weight distribution WD7 and the even weight distribution WD5.
This implies that the first three values of the weight distribution WD1, Equations~(\ref{eq:evenEWD}) and~(\ref{eq:oddEWD}),
and the uniqueness of the four weight distributions for zeroed and non-zeroed ENP1CCs, yields a determined weight distribution
that starts with the three values of WD1.

Similar analysis is done for WD1, WD2, and WD3~\cite{EKSS}.

This analysis and the possible matchings of the weight distributions WD1, WD2, WD3, and WD4, to zeroed and non-zeroed NP1CCs
of \Type{\TA}, \Type{\TB}, \Type{\TC}, imply the claim of the theorem.
\end{IEEEproof}

Theorem~\ref{thm:uniqueNP1CC} has a relatively short and simple proof replacing the proof in~\cite{BER25}
which was quite complicated and used much heavier tools outlined in Chapters 5 and 6 in~\cite{McSl77}.

The weight distribution of a perfect code and a translate of a perfect code was determined in~\cite{EtVa94}
based on a solution for the recursion on the weight distribution of an $(n-1,M/2)$ perfect code and its translates.
If $\cC$ is an $(n-1,M/2)$ perfect code of length $n-1=2^r-1 = 2 \nu +1$, i.e., $\nu = 2^{r-1} -1$, then
\vspace{-0.05cm}
\begin{equation}
\label{eq:WD_perfect}
\cA_i(\cC) = \frac{\binom{n-1}{i} + (n-1) \Delta_i }{n} \triangleq \alpha_i,
\end{equation}
where
\vspace{-0.05cm}
\begin{equation}
\label{eq:sol_sum_w}
\Delta_i=\begin{cases}
          \binom{\nu}{\lfloor i/2 \rfloor} & ~~ i \equiv 0,3~(\textup{\mmod}~4)\\
          -\binom{\nu}{\lfloor i/2 \rfloor} & ~~i \equiv 1,2~(\textup{\mmod}~4)
         \end{cases}~.
\end{equation}
If $\cC$ is an odd translate of an $(n-1,M/2)$ perfect code of length $n-1=2^r-1$, then
\vspace{-0.05cm}
\begin{equation}
\label{eq:WD_perfect_trans}
\cA_i(\cC) = \frac{\binom{n-1}{i} - \Delta_i }{n} \triangleq \beta_i,
\end{equation}

Note that $n=2\nu +2$ and for odd $i$,
\vspace{-0.05cm}
\begin{equation}
\label{eq:zeroDelta}
\Delta_i + \Delta_{i-1} =0
\end{equation}
and for even $i$ we have
\vspace{-0.05cm}
\begin{equation}
\label{eq:non_zeroDelta}
\Delta_i + \Delta_{i-1} =
        \begin{cases}
          \binom{\nu+1}{i/2} & ~~ i \equiv 0~(\textup{\mmod}~4)\\
          -\binom{\nu+1}{i/2} & ~~i \equiv 2~(\textup{\mmod}~4)
        \end{cases}~.
\end{equation}

The equalities of Equations~(\ref{eq:zeroDelta}) and~(\ref{eq:non_zeroDelta}) will be used throughout this section.
The weight distribution of a perfect code given in~(\ref{eq:WD_perfect}) and in~(\ref{eq:WD_perfect_trans}), and the value
of $\Delta_i$ in~(\ref{eq:sol_sum_w}) imply the weight distributions of an extended perfect code
and also of translates of the extended perfect code as follows.
\begin{theorem}
$~$
\begin{enumerate}
\item If $\cC$ is a zeroed $(n,M/2)$ extended perfect code, then $\cA_0(\cC)=1$, $\cA_i(\cC)=0$ for odd $i$,
and for even $i >0$ we have
\vspace{-0.07cm}
\begin{equation*}
\label{eq:WD_EXH}
\cA_i (\cC)= \alpha_i + \alpha_{i-1} = \frac{\binom{n}{i} + (n-1)(\Delta_i + \Delta_{i-1})}{n}\\
\end{equation*}

\item If $\cC$ is an odd translate of an $(n,M/2)$ extended perfect code, then $\cA_i(\cC)=0$ for even $i$ and for odd $i$ we have
\vspace{-0.05cm}
\begin{equation*}
\label{eq:WD_EXH_O}
\cA_i (\cC)= \frac{\binom{n}{i} + (n-1)(\Delta_i + \Delta_{i-1})}{n}  =\frac{1}{n} \binom{n}{i}. 
\end{equation*}

\item If $\cC$ is an even non-zeroed $(n,M/2)$ extended perfect code, then $\cA_i(\cC)=0$ for odd $i$,
$\cA_0 (\cC)=0$, and for even $i >0$ we have
\vspace{-0.05cm}
\begin{equation*}
\label{eq:WD_EXH_E}
\cA_i (\cC)=\beta_i + \beta_{i-1} = \frac{\binom{n}{i} - (\Delta_i + \Delta_{i-1})}{n}~.
\end{equation*}
\end{enumerate}
\end{theorem}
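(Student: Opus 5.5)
The three parts all follow one pattern. Puncture the last coordinate of the extended code (or of its translate) to get a perfect code or a translate of one. Then apply (\ref{eq:WD_perfect}) or (\ref{eq:WD_perfect_trans}), and simplify with (\ref{eq:zeroDelta}) and (\ref{eq:non_zeroDelta}) together with Pascal's rule $\binom{n-1}{i}+\binom{n-1}{i-1}=\binom{n}{i}$.

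The basic relation is the one used in (\ref{eq:evenEWD}). Let $\cC$ be an $(n,M/2)$ extended perfect code, or a translate of one, all of whose codewords have the same parity. Let $\cC'$ be the code obtained by deleting the last coordinate. A codeword of $\cC$ of weight $i$ comes from a codeword of $\cC'$ of weight $i$ (last bit $0$) or of weight $i-1$ (last bit $1$). Which case occurs is forced by parity, so for every $i$ of the correct parity
$$
\cA_i(\cC)=\cA_i(\cC')+\cA_{i-1}(\cC'),
$$
and $\cA_i(\cC)=0$ for $i$ of the wrong parity. It remains to identify $\cC'$ in each case.

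\emph{Part 1.} Puncturing a zeroed extended perfect code gives a zeroed perfect code $\cC'$, by the correspondence between zeroed perfect and zeroed extended perfect codes noted before. So $\cA_i(\cC')=\alpha_i$, and $\cA_i(\cC)=\alpha_i+\alpha_{i-1}=\frac{\binom{n}{i}+(n-1)(\Delta_i+\Delta_{i-1})}{n}$ for even $i>0$.

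\emph{Part 3.} An even non-zeroed translate $\bldx+\cC_0$ of a zeroed extended perfect code $\cC_0$, with $\bldx\notin\cC_0$ of even weight, punctures to $\bldx'+\cC_0'$, where $\cC_0'$ is perfect. The word $\bldx'$ has odd weight if the last bit of $\bldx$ is $1$ and even weight otherwise. This needs care. The intended reading is that $\cC'$ is a translate of a perfect code not containing $\mathbf{0}$, whose weight distribution is $\beta_i$ (by \cite{EtVa94}, every non-zeroed translate of a perfect code has the same distribution $\beta_i$, since $\mathbf{0}$ is then at distance exactly one from it). I would check that $\cC'$ is non-zeroed: if $\mathbf{0}\in\cC'$, then $(\mathbf{0},p)\in\cC$ and parity forces $p=0$, contradicting non-zeroedness. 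Summing $\beta_i+\beta_{i-1}$ gives the formula.

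\emph{Part 2.} Here the translate is odd, so after puncturing $\cC'$ is again a non-zeroed translate, and for odd $i$ we get $\cA_i(\cC)=\beta_i+\beta_{i-1}=\frac{\binom{n}{i}-(\Delta_i+\Delta_{i-1})}{n}$. By (\ref{eq:zeroDelta}) this equals $\frac{1}{n}\binom{n}{i}$. The statement writes the coefficient as $(n-1)$ instead of $-1$, but it is multiplied by $\Delta_i+\Delta_{i-1}=0$, so both expressions agree. As a sanity check, an odd translate of an extended perfect code is a code at distance one from every even word exactly once (the extended perfect code is a perfect matching of even words to odd words). Double counting then gives $n\,\cA_i=\binom{n}{i}$ directly for odd $i$.

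The main obstacle is the claim that every non-zeroed translate of a perfect code has distribution $\beta_i$, regardless of the translating vector. The definition of "odd translate" in (\ref{eq:WD_perfect_trans}) should be read with this in mind. I would justify it by noting that $\mathbf{0}$ is then covered by exactly one codeword of weight~$1$, so the recursion of \cite{EtVa94} starts from the same initial values and yields $\beta_i$. This is also the point where the case split on the last bit of $\bldx$ is absorbed.
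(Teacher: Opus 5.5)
Your proposal is correct and follows the paper's route: regard $\cC$ as a parity extension of its puncture $\cC'$, use $\cA_i(\cC)=\cA_i(\cC')+\cA_{i-1}(\cC')$ with $\alpha_i$ or $\beta_i$, and simplify with Pascal's rule and (\ref{eq:zeroDelta})--(\ref{eq:non_zeroDelta}). There is one slip in Part~2: the punctured code need not be non-zeroed. For the odd translate $\mathbf{e}_n+\cC_0$, where $\mathbf{e}_n$ is the unit vector in the last coordinate, the puncture is the zeroed code $\cC_0'$, which gives $\alpha_i+\alpha_{i-1}$ (the form with coefficient $(n-1)$ in the statement); since $\Delta_i+\Delta_{i-1}=0$ for odd $i$, both cases equal $\frac{1}{n}\binom{n}{i}$, so your conclusion is unaffected.
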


The unique four weight distributions for zeroed NP1CCs and their translates as implied by Theorem~\ref{thm:uniqueNP1CC},
the weight distribution for
an extended perfect code and its translates of even and odd weights imply the weight distribution of zeroed $(n,M)$ NP1CCs
of \Type{\TA} and zeroed $(n,M)$ NP1CCs of \Type{\TC} that contain a codeword of weight one.
The proof of this weight distribution is based on Lemma~\ref{lem:TypeA} and Theorem~\ref{thm:uniqueNP1CC}.

\begin{theorem}
\label{thm:wd_npA}
$~$
\begin{enumerate}
\item If $\cC$ is a zeroed $(n,M)$ NP1CC of \Type{\TA} or a zeroed $(n,M)$ NP1CC of \Type{\TC} with a codeword of weight one, then
$
\cA_0(\cC)=\cA_{n} (\cC)=1.
$
For other even $i$,
\vspace{-0.05cm}
$$
\cA_i (\cC) = \alpha_i + \alpha_{i-1}= \frac{\binom{n}{i} + (n-1) (\Delta_{i-1} +\Delta_i)}{n} ~.
$$
\vspace{-0.05cm}
$$
\text{For~odd} ~ i, ~1 \leq i \leq n-1,~
\cA_i (\cC) = \frac{1}{n} \binom{n}{i} .
$$

\item If $\cC$ is a non-zeroed $(n,M)$ NP1CC of \Type{\TA} or a non-zeroed $(n,M)$ NP1CC, of either \Type{\TB} or \Type{\TC},
that contains exactly one codeword of weight one, then
\vspace{-0.05cm}
$$
\cA_0(\cC)=\cA_{n} (\cC)=0.
$$
For even $i$, $1 < i < n-1$,
\vspace{-0.05cm}
$$
\cA_i (\cC) = \beta_i + \beta_{i-1} = \frac{\binom{n}{i} - (\Delta_{i-1} +\Delta_i)}{n} ~.
$$
\vspace{-0.05cm}
$$
\text{For~odd} ~ i, ~ 1 \leq i \leq n-1, ~
\cA_i (\cC) = \frac{1}{n} \binom{n}{i} .
$$
\end{enumerate}
\end{theorem}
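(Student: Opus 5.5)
The plan is to prove both parts by an explicit construction combined with the uniqueness in Theorem~\ref{thm:uniqueNP1CC}. That theorem says the weight distribution of a zeroed (respectively non-zeroed) NP1CC is determined by its first values. It therefore suffices to exhibit one code of \Type{\TA} in each case and compute its weight distribution. The \Type{\TC} code with a weight-one codeword (respectively the \Type{\TB} or \Type{\TC} codes with exactly one codeword of weight one) has the same initial values, WD1 (respectively WD3), and so inherits the same distribution.

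For part~1, I use Lemma~\ref{lem:TypeA}: a \Type{\TA} code is $\cC = \cC_e \cup \cC_o$, where $\cC_e$ is an even translate and $\cC_o$ an odd translate of an extended perfect code of length $n$. If $\cC$ is zeroed, then $\mathbf{0}\in\cC_e$, so $\cC_e$ is a zeroed extended perfect code. Its even weights are given by item~1 of the preceding theorem, namely $\cA_i=\alpha_i+\alpha_{i-1}$, with $\cA_0=1$. The odd weights come from $\cC_o$ by item~2, giving $\binom{n}{i}/n$. Item~2 uses $\Delta_i+\Delta_{i-1}=0$ for odd $i$, which is (\ref{eq:zeroDelta}).

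It remains to check the endpoint values $\cA_0=\cA_n=1$. For $\cA_n$, I will verify that the all-one word lies in $\cC_e$. This follows because the all-one word of length $n-1$ belongs to every zeroed perfect code; equivalently, $\alpha_{n-1}=1$, which can be checked from (\ref{eq:WD_perfect}) using $\Delta_{n-1}=\binom{\nu}{\nu}=1$, since $n-1\equiv 3 \pmod 4$. Then $\alpha_n+\alpha_{n-1}=1$.

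Part~2 follows the same pattern. Let $\cC$ be a non-zeroed \Type{\TA} code with exactly one codeword of weight one; this is the WD3 case. The odd translate $\cC_o$ again contributes $\binom{n}{i}/n$ in odd weights. The even part $\cC_e$ is now a non-zeroed even extended perfect code, and item~3 of the preceding theorem gives $\beta_i+\beta_{i-1}$. The endpoints $\cA_0=0$ and $\cA_n=0$ follow from $\beta_0=0$ and $\beta_{n-1}+\beta_n=0$, which I will check with the same $\Delta$ values. I also need to confirm that the initial values match WD3, that is, $\cA_1=1$ and $\cA_2=n/2$. Indeed $\binom{n}{1}/n=1$, and $\beta_2+\beta_1=\bigl(\binom{n}{2}-(\Delta_2+\Delta_1)\bigr)/n=\bigl(\binom{n}{2}+\binom{\nu+1}{1}\bigr)/n=n/2$, using $\nu+1=n/2$.

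The main obstacle is the transfer step to \Type{\TB} and \Type{\TC}. Theorem~\ref{thm:uniqueNP1CC} asserts uniqueness only in terms of the first three values, so the hard part is making sure that these codes really fall into the same class, WD1 or WD3, as the \Type{\TA} code. I would do this by tracking the weight-one codewords and partners explicitly. I would also double-check the extended-code relations (\ref{eq:evenEWD}) and (\ref{eq:oddEWD}) to confirm that the determined distribution coincides with the one computed from the \Type{\TA} construction, rather than merely having the same first entries.
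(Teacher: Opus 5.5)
Your proposal is correct and is essentially the paper's proof: decompose the \Type{\TA} code via Lemma~\ref{lem:TypeA}, read the even and odd parts off the extended-perfect-code theorem (your endpoint checks $\alpha_{n-1}=1$, $\beta_0=\beta_{n-1}=\beta_n=0$ and $\beta_2+\beta_1=\frac{n}{2}$ are right), and pass to \Type{\TB} and \Type{\TC} through the uniqueness in Theorem~\ref{thm:uniqueNP1CC}. The step you single out as the main obstacle is already supplied by that theorem's proof: in the zeroed case the partner property alone forces WD1, but in the non-zeroed case $\cA_2(\cC)=\frac{n}{2}$ does not follow from tracking partners; it comes from $\cA_1(\cC)+\cA_2(\cC)=\cA_2(\cC^*)=\frac{n}{2}+1$ for the even extension, via Theorem~\ref{thm:WD_CR_E}.
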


\begin{theorem}
\label{thm:wd_npB}
$~$
\begin{enumerate}
\item If $\cC$ is a zeroed $(n,M)$ NP1CC of \Type{\TB} or a zeroed $(n,M)$ NP1CC of \Type{\TC} that contains a codeword of weight two, then
$
\cA_0(\cC)=1,
$
and for $i$, $1 \leq i \leq n$,
\vspace{-0.05cm}
$$
\cA_i (\cC) = \frac{\binom{n}{i} + (n-1)\Delta_i -\Delta_{i-1}}{n} ~.
$$

\item If $\cC$ is a translate of an $(n,M)$ NP1CC of \Type{\TB} that contains two codewords of weight one, or
a translate of an $(n,M)$ NP1CC of \Type{\TC} that contains two codewords of weight one, then
$
\cA_0(\cC)=0, ~~ \cA_1(\cC)=2,
$
and for $i \geq 2$,
\vspace{-0.08cm}
$$
\cA_i (\cC) = \frac{\binom{n}{i} + (n-1)\Delta_{i-1} -\Delta_i}{n} ~.
$$
\end{enumerate}
\end{theorem}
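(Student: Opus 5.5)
By Theorem~\ref{thm:uniqueNP1CC}, the weight distribution in each part is determined by its first three values. So it suffices to exhibit one code in each class and compute its distribution. I will use the construction of Theorem~\ref{thm:simpConst}.

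For part~1, take $\cC_1$ a zeroed $(n-1,M/2)$ perfect code and $\cC_2$ an odd translate of a perfect code that contains no word of weight $0$ or $1$ relevant to a weight-one partner. Concretely, pick $\cC_2=\bldx+\cC_1$ with $\bldx$ of weight one. Then $(\mathbf{0},0)$ and $(\bldx,1)$ are at distance two, so the resulting NP1CC $\cC$ has the initial values of WD2. Moreover, $\cC$ is zeroed of \Type{\TB} or \Type{\TC} with a codeword of weight two.

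The weight distribution of $\cC$ is $\cA_i(\cC)=\cA_i(\cC_1)+\cA_{i-1}(\cC_2)$. Since $\cC_2$ is a translate of a perfect code by a word of weight one, it is a non-zeroed perfect code, with distribution $\beta_j$ from~(\ref{eq:WD_perfect_trans}). By the uniqueness result of~\cite{EtVa94}, every non-zeroed translate of a perfect code has this distribution (the paper calls it odd, meaning it has distance $1$ from a codeword). This gives
$$
\cA_i(\cC)=\alpha_i+\beta_{i-1}=\frac{\binom{n-1}{i}+(n-1)\Delta_i+\binom{n-1}{i-1}-\Delta_{i-1}}{n},
$$
and Pascal's rule yields the claimed formula. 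I will check the boundary values directly: at $i=0$ it gives $1$, and at $i=1$ it gives $(n-1-(n-1)-1+1)/n$. The latter needs care with $\Delta_0=1$ and $\Delta_1=-1$; the resulting value $0$ agrees with WD2.

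For part~2, I swap the roles: $\cC_1$ is a translate by a weight-one word $\bldx$ of a zeroed perfect code, and $\cC_2$ is also a non-zeroed translate. The aim is for $\cC$ to contain two words of weight one: $(\bldx,0)$, and $(\mathbf{0}',1)$, which would require $\cC_2$ to be zeroed. So instead I take $\cC_1=\bldx+\cC_0$, a non-zeroed perfect code with one weight-one word, and $\cC_2=\cC_0$, a zeroed perfect code. Then $(\bldx,0)$ and $(\mathbf{0},1)$ are two weight-one codewords at distance two, which are partners, so this matches WD4. The distribution is $\cA_i=\beta_i+\alpha_{i-1}$, and combining binomials gives $\bigl(\binom{n}{i}+(n-1)\Delta_{i-1}-\Delta_i\bigr)/n$.

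The main obstacle is justifying that the constructed codes really fall in the stated classes and that the first three values match WD2 and WD4. In particular, I need $\cA_2$ in part~2 to be $\frac n2-1$. I will verify this numerically from the formula using $\Delta_1=-1$ and $\Delta_2=-\nu$. This gives $(\binom n2-(n-1)+\nu)/n$, which simplifies to $\frac n2-1$ using $n=2\nu+2$; that confirms consistency. After this check, uniqueness from Theorem~\ref{thm:uniqueNP1CC} transfers the formula to all codes in each class.
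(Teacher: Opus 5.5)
Your proposal is correct and matches the paper's intended argument. The paper states this theorem without a separate proof, but, as for Theorem~\ref{thm:wd_npA}, it rests on the uniqueness in Theorem~\ref{thm:uniqueNP1CC} plus one explicit instance, and the two closed forms are exactly $\alpha_i+\beta_{i-1}$ and $\beta_i+\alpha_{i-1}$: the distributions of the code of Theorem~\ref{thm:simpConst} built from a zeroed perfect code and its translate by a weight-one word, which is what you use. (Both your examples are in fact of \Type{\TB}, since every partner pair is $\{(\bldc,0),(\bldc+\bldx,1)\}$. Your check $\cA_2(\cC)=\frac{n}{2}-1$ is not needed, because it follows automatically from $\cA_0(\cC)=0$ and $\cA_1(\cC)=2$.)
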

To the information given in Theorem~\ref{thm:uniqueNP1CC} we add the
weight distributions for ENP1CCs, diamond codes, and their translates.

\begin{theorem}
\label{thm:WD_ENP1CC}
$~$
\begin{enumerate}
\item If $\cC^*$ is a zeroed $(n+1,M)$ ENP1CC, then $\cA_i (\cC^*)=0$ for odd $i$,
$\cA_0(\cC^*)=\cA_{n} (\cC^*)=1$, and for even $i > 0$,
\vspace{-0.05cm}
$$
\cA_i(\cC^*) = \frac{\binom{n+1}{i} + (n-1)(\Delta_{i-1} +\Delta_i)}{n}.
$$

\item If $\cC^*$ is a non-zeroed $(n+1,M)$ ENP1CC with exactly one codeword of weight one, then $\cA_i (\cC^*)=0$ for even~$i$,
$\cA_1(\cC^*)=1$, and for odd $i>1$,
\vspace{-0.05cm}
$$
\cA_i (\cC^*) = \frac{\binom{n+1}{i} - (\Delta_{i-1}+\Delta_{i-2} ) }{n} ~.
$$

\item If $\cC^*$ is a non-zeroed $(n+1,M)$ ENP1CC with exactly two codewords of weight one, then $\cA_i(\cC^*)=0$ for even~$i$,
$\cA_1 (\cC^*)=2$, and for odd $i>1$,
\vspace{-0.05cm}
$$
\cA_i (\cC^*) = \frac{\binom{n+1}{i} - (n-1)(\Delta_{i-1}+\Delta_{i-2})   }{n} ~.
$$

\item If $\cC^*$ is a non-zeroed $(n+1,M)$ ENP1CC with even weight codewords, then $\cA_i(\cC^*)=0$ for odd~$i$, $\cA_0(\cC^*)=0$,
and for even $i >0$,
\vspace{-0.05cm}
$$
\cA_i (\cC^*) = \frac{\binom{n+1}{i} - (\Delta_{i}+\Delta_{i-1} ) }{n} ~.
$$
\end{enumerate}
\end{theorem}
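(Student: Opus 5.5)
The plan is to compute each of the four ENP1CC weight distributions from the NP1CC weight distributions of Theorems~\ref{thm:wd_npA} and~\ref{thm:wd_npB}, using Equations~(\ref{eq:evenEWD}) and~(\ref{eq:oddEWD}). By Theorem~\ref{thm:WD_CR_E} each ENP1CC distribution is unique once its first values are fixed. So it suffices to exhibit, for each case, one NP1CC $\cC$ whose extension (even or odd parity) has the required initial values, and then evaluate $\cA_i(\cC^*)=\cA_{i-1}(\cC)+\cA_i(\cC)$. The simplifications rely on Pascal's rule $\binom{n}{i}+\binom{n}{i-1}=\binom{n+1}{i}$ together with the identities~(\ref{eq:zeroDelta}) and~(\ref{eq:non_zeroDelta}).

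\textbf{Case 1 (zeroed ENP1CC).} Take $\cC$ a zeroed NP1CC of \Type{\TB} from Theorem~\ref{thm:wd_npB}(1), which has WD2, and extend it with even parity. This gives WD5. For even $i>0$ we get
\[
\cA_i(\cC^*)=\frac{\binom{n+1}{i}+(n-1)\Delta_i-\Delta_{i-1}+(n-1)\Delta_{i-1}-\Delta_{i-2}}{n}.
\]
Since $i-1$ is odd, (\ref{eq:zeroDelta}) gives $\Delta_{i-2}=-\Delta_{i-1}$. The numerator therefore becomes $\binom{n+1}{i}+(n-1)(\Delta_i+\Delta_{i-1})$. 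The boundary value $\cA_n(\cC^*)=1$ should then follow from the antipodal structure (complement) or by direct evaluation.

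\textbf{Case 4 (non-zeroed even).} Use Theorem~\ref{thm:wd_npA}(2) with even parity extension:
\[
\cA_i(\cC^*)=\beta_i+\beta_{i-1}+\tfrac{1}{n}\binom{n}{i-1}.
\]
The odd-index term is $\frac1n\binom{n}{i-1}$, and it combines with $\beta_i+\beta_{i-1}$ via Pascal to give the stated formula.

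\textbf{Case 2 (one codeword of weight one).} Use the same non-zeroed \Type{\TA} code with odd parity extension. Its single weight-one word stays weight one, and its weight-two words (there are $n/2$) go to weight three. For odd $i>1$ we get $\frac1n\binom{n}{i}+\beta_{i-1}+\beta_{i-2}$, which equals $\frac{\binom{n+1}{i}-(\Delta_{i-1}+\Delta_{i-2})}{n}$.

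\textbf{Case 3 (two codewords of weight one).} Use the zeroed \Type{\TA} code from Theorem~\ref{thm:wd_npA}(1) with odd parity extension. The zero word becomes weight one and the weight-one word stays weight one, giving WD7. For odd $i>1$ we get $\frac1n\binom{n}{i}+\alpha_{i-1}+\alpha_{i-2}$. Here one must reconcile signs with the stated formula; the stated formula has a minus sign, so I would check whether the weight-$(i-1)$ contribution involves $\Delta_{i-1}+\Delta_{i-2}$ with the opposite sign. Alternatively, use Theorem~\ref{thm:wd_npB}(2) with odd extension and apply (\ref{eq:zeroDelta}) as in Case 1.

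The main obstacle is bookkeeping. We must pick the right NP1CC representative so that its extension has exactly the prescribed initial values, and we must track index shifts and signs of $\Delta$ across the parity of $i$. The cases $i=n$ and $i=n+1$ may need to be checked separately, since Theorem~\ref{thm:wd_npA} lists the top weights as boundary values rather than through its general formula.
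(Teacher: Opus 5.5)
Your route is the right one, and it is exactly what (\ref{eq:evenEWD})--(\ref{eq:oddEWD}) are set up for; the paper itself states this theorem without proof. You take one NP1CC representative per class, extend it with the appropriate parity, get uniqueness from Theorem~\ref{thm:WD_CR_E}, and simplify with Pascal's rule and (\ref{eq:zeroDelta}). Your algebra for the general formula of item~1 and for items~2 and~4 is correct, and the indices $i=n,n+1$ cause no trouble there. The gap is in the two places you left open (``should then follow'', ``I would check''). Neither can be closed, because the statement is wrong at exactly those points, and your own computations already show it.

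\textbf{Item 1.} Your formula also holds at $i=n$, and it gives $\cA_n(\cC^*)=\bigl(\binom{n+1}{n}+(n-1)(\Delta_{n-1}+\Delta_n)\bigr)/n=2$. This is because $n\equiv 0 \pmod 4$ gives $\Delta_{n-1}+\Delta_n=\binom{\nu+1}{\nu+1}=1$. Directly, $\cA_{n-1}(\cC)+\cA_n(\cC)=2+0$ for your Type~B representative. The complement argument you allude to also gives $2$: by Lemma~\ref{lem:Mid_comp}, the weight-$n$ codewords are the complements of the two weight-one midwords of the pair $\{\mathbf{0},\bldc'\}$. For $n=4$, $\cC=\{0000,1110,1001,0111\}$ yields $\cC^*=\{00000,11101,10010,01111\}$, which has two words of weight $4$.

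\textbf{Item 3.} Both of your routes give $\cA_i(\cC^*)=\bigl(\binom{n+1}{i}+(n-1)(\Delta_{i-1}+\Delta_{i-2})\bigr)/n$ with a plus sign. In the second route, $-\Delta_i=\Delta_{i-1}$ for odd $i$ cancels the stray terms. The plus sign is the correct one:
\begin{itemize}
\item For $n=4$, $\cC=\{0000,1111,1000,0111\}$ yields $\cC^*=\{00001,11111,10000,01110\}$ with $\cA_3(\cC^*)=1$, whereas the stated minus sign predicts $4$.
\item For $n=8$ and $i=5$, the minus sign predicts the non-integer $84/8$.
\item The plus sign is also what the symmetry $\cA_i(\hat{\cC})=\cA_{n+1-i}(\hat{\cC})$ in the paper's diamond-code theorem produces.
\end{itemize}

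So your argument proves items~2 and~4 as stated, item~1 with $\cA_n(\cC^*)=2$, and item~3 with $+(n-1)$. A complete write-up must say so explicitly rather than defer these points, since the literal statement cannot be proved.
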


The proof of the next theorem is based on Corollary~\ref{cor:CReqENP} and Theorem~\ref{thm:WD_ENP1CC} or on the complements of codewords
of an ENP1CCs (see Lemma~\ref{lem:Mid_comp}).

\begin{theorem}
$~$
\begin{enumerate}
\item If $\hat{\cC}$ is a zeroed $(n+1,2M)$ diamond code, then
\vspace{-0.05cm}
$$
\cA_0(\hat{\cC})=\cA_{n+1} (\hat{\cC})=1,~~ \cA_1(\hat{\cC})=\cA_n(\hat{\cC})=2.
$$
For even $i >0$,
\vspace{-0.05cm}
$$
\cA_i(\hat{\cC}) = \frac{\binom{n+1}{i} + (n-1)(\Delta_{i-1} +\Delta_i)}{n}.
$$
and for odd $i >1$, $\cA_i(\hat{\cC}) = \cA_{n+1-i}(\hat{\cC})$, i.e.,
\begin{equation*}
\cA_i (\hat{\cC}) = \frac{\binom{n+1}{i} - (n-1)(\Delta_{i-1}+\Delta_{i-2})   }{n} ~.
\end{equation*}

\item If $\hat{\cC}$ is a non-zeroed $(n+1,2M)$ diamond code, then $\cA_0 (\hat{\cC})=0$ and $\cA_1 (\hat{\cC}) =1$.
For even $i >0$ we have
\vspace{-0.07cm}
$$
\cA_i(\hat{\cC}) = \frac{\binom{n+1}{i} - (\Delta_{i}+\Delta_{i-1} ) }{n} ~.
$$
For odd $i >1$ we have
\vspace{-0.05cm}
$$
\cA_i (\hat{\cC}) = \frac{\binom{n+1}{i} - (\Delta_{i-1}+\Delta_{i-2} ) }{n} ~.
$$

\end{enumerate}
\end{theorem}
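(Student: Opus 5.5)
The plan is to split each diamond code $\hat{\cC}$ into its even-weight part and its odd-weight part, and to identify each part with an ENP1CC (or a translate of one) whose weight distribution is already given by Theorem~\ref{thm:WD_ENP1CC}. By Corollary~\ref{cor:CodeCycles}, the codewords of $\hat{\cC}$ lie on $4$-cycles, and every cycle has exactly two even-weight and two odd-weight codewords. Hence $\cA_i(\hat{\cC})$ for even $i$ is the weight distribution of the even-weight half, and for odd $i$ it is that of the odd-weight half. By Corollary~\ref{cor:CReqENP}, both halves are ENP1CCs of length $n+1$. By Theorem~\ref{thm:WD_SR}, it suffices to compute the distribution for one representative, but the argument below does not even need a representative.

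\textbf{Zeroed case.} Take a zeroed diamond code $\hat{\cC}$.
\begin{enumerate}
\item Its even-weight codewords form a zeroed $(n+1,M)$ ENP1CC (third item of Corollary~\ref{cor:CReqENP}). Part~1 of Theorem~\ref{thm:WD_ENP1CC} then gives $\cA_0=1$ and the stated formula for even $i>0$.
\item Its odd-weight codewords form a non-zeroed ENP1CC with two codewords of weight one (fourth item of Corollary~\ref{cor:CReqENP}). Part~3 of Theorem~\ref{thm:WD_ENP1CC} then gives $\cA_1=2$ and the stated formula for odd $i>1$.
\item For the symmetry $\cA_i(\hat{\cC})=\cA_{n+1-i}(\hat{\cC})$, use the first item of Corollary~\ref{cor:CReqENP} together with Lemma~\ref{lem:Mid_comp}. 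The odd part of $\hat{\cC}$ is the set of midwords of the even part $\cC^*$, and this set equals $\{\bar{\bldc} : \bldc\in\cC^*\}$. Since $n+1$ is odd, complementation sends weight $w$ to weight $n+1-w$, so it maps even weights bijectively onto odd weights.
\item This gives $\cA_{n+1}=\cA_0=1$ and $\cA_n=\cA_1=2$. (The value $\cA_n$ listed in Part~1 of Theorem~\ref{thm:WD_ENP1CC} concerns the even-weight vector $\bar{\bldc}$ of the weight-one word $\bldc$, so it is consistent.)
\end{enumerate}

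\textbf{Non-zeroed case.} Take a non-zeroed diamond code $\hat{\cC}$.
\begin{enumerate}
\item Its odd half is a non-zeroed ENP1CC with exactly one codeword of weight one (fifth item of Corollary~\ref{cor:CReqENP}). Part~2 of Theorem~\ref{thm:WD_ENP1CC} gives $\cA_1=1$ and the formula for odd $i>1$.
\item Its even half is a non-zeroed ENP1CC of even weights (sixth item). Part~4 gives $\cA_0=0$ and the formula for even $i>0$.
\end{enumerate}

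\textbf{Main obstacle.} The step needing most care is checking that the halves are matched to the right ENP1CC type. That $\hat{\cC}$ being zeroed forces $\cA_1(\hat{\cC})=2$ follows from the $4$-cycle structure: the cycle through $\mathbf{0}$ has two weight-one neighbours. A non-zeroed diamond code has $\cA_1=1$. To see this, note that $\mathbf{0}$ is a non-codeword, so by the quotient matrix $\Diamond$ it has exactly one codeword neighbour. This fixes which item of Corollary~\ref{cor:CReqENP} applies in each case.

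I would also sanity-check the symmetric formula for odd $i$. From $\cA_i(\hat{\cC})=\cA_{n+1-i}(\hat{\cC})$ and the even formula, one needs $\binom{n+1}{n+1-i}=\binom{n+1}{i}$ together with
\[
\Delta_{n-i}+\Delta_{n+1-i}=-(\Delta_{i-1}+\Delta_{i-2}).
\]
This identity should follow from~(\ref{eq:non_zeroDelta}), using $\binom{\nu+1}{j}=\binom{\nu+1}{\nu+1-j}$ and $n=2\nu+2\equiv 0 \pmod 4$ for $r\ge 3$. The case $r=2$ should be checked directly.
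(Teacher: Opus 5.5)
Your route is the one the paper indicates: split $\hat{\cC}$ into even and odd halves, identify them via Corollary~\ref{cor:CReqENP}, and read off Theorem~\ref{thm:WD_ENP1CC}, or use complement-closure from Lemma~\ref{lem:Mid_comp}. It does give Part~2 and the even-weight formula of Part~1. The step that fails is the one you deferred as a sanity check: the identity $\Delta_{n-i}+\Delta_{n+1-i}=-(\Delta_{i-1}+\Delta_{i-2})$ is false.

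Put $j=n+1-i$, which is even. Then $j/2+(i-1)/2=\nu+1$, so the binomials in~(\ref{eq:non_zeroDelta}) coincide. Also $j+(i-1)=n\equiv 0 \pmod 4$, which holds already for $r=2$, so no separate case is needed. This forces $j\equiv i-1 \pmod 4$, so the signs coincide too, and $\Delta_{n-i}+\Delta_{n+1-i}=+(\Delta_{i-1}+\Delta_{i-2})$. Hence the complement-closure you correctly derived proves
\[
\cA_i(\hat{\cC})=\frac{\binom{n+1}{i}+(n-1)(\Delta_{i-1}+\Delta_{i-2})}{n},\qquad i\ \text{odd},\ 3\le i\le n+1,
\]
and the minus-sign expression stated in Part~1 is not the weight distribution. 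For $n=8$, $i=5$ it gives $(126-42)/8=10.5$. For $n=4$, the zeroed diamond code $\{00000,10000,00001,10001,01110,01111,11110,11111\}$ has weight distribution $(1,2,1,1,2,1)$, but the stated formula gives $\cA_3=4$. Independently, the weight-$3$ codewords of a zeroed diamond code form a Steiner triple system on the $n-1$ coordinates outside the support of its weight-$2$ codeword. So $\cA_3=(n-1)(n-2)/6$, which is the plus-sign value.

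Citing Part~3 of Theorem~\ref{thm:WD_ENP1CC} does not rescue the minus sign, because that item carries the same sign error. The odd half $\{10000,00001,01110,11111\}$ of the code above has one word of weight~$3$, not four. So your first derivation only transports a wrong formula, and your own symmetry check, had you carried it out, is exactly what exposes it.

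Your parenthetical about $\cA_n$ is also off. Since $n$ is even, both weight-$n$ codewords of $\hat{\cC}$ lie in the even half $\cC^*$, so $\cA_n(\cC^*)=2$. This agrees with the formula of Part~1 of Theorem~\ref{thm:WD_ENP1CC} at $i=n$ (as $\Delta_{n-1}+\Delta_n=1$). It contradicts the value $\cA_n(\cC^*)=1$ printed there, which is a slip to be corrected, not reconciled.

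What your argument actually establishes is Part~2 as stated and Part~1 with the odd-weight formula carrying $+(n-1)$. To make the proof sound, obtain the odd weights from complement-closure plus the even formula and the corrected identity, not from Part~3 of Theorem~\ref{thm:WD_ENP1CC}.
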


\end{document}